\renewcommand{\epsilon}{\varepsilon}
\renewcommand{\emptyset}{\varnothing}
\newtheorem{theorem}{Theorem}[section]
\newtheorem{proposition}[theorem]{Proposition}
\newtheorem{corollary}[theorem]{Corollary}
\newtheorem{lemma}[theorem]{Lemma}
\newtheorem*{Main Theorem}{Main Theorem}
\theoremstyle{definition}
\theoremstyle{remark}
\newtheorem{remark}[theorem]{Remark}
\DeclareMathOperator{\hd}{hd}
\DeclareMathOperator{\cd}{cd}
\DeclareMathOperator{\gd}{gd}
\newcommand{\Q}{\mathbb Q}
\newcommand{\Z}{\mathbb Z}
\newcommand{\N}{\mathbb N}
\newcommand{\uuhd}{\mathop{\underline{\underline\hd}}}
\newcommand{\uucd}{\mathop{\underline{\underline\cd}}}
\newcommand{\uugd}{\mathop{\underline{\underline\gd}}}
\newcommand{\EXT}{\operatorname{Ext}}
\newcommand{\cohom}[3]{H^{{\raise1pt\hbox{$\scriptstyle#1$}}}(#2\>\!,#3)}
\newcommand{\tatecohom}[3]%
  {\widehat H^{{\raise1pt\hbox{$\scriptstyle#1$}}}(#2\>\!,#3)}
\newcommand{\Cohom}[3]%
  {H^{{\raise1pt\hbox{$\scriptstyle#1$}}}\big(#2\>\!,#3\big)}
\newcommand{\Tatecohom}[3]%
  {\widehat H^{{\raise1pt\hbox{$\scriptstyle#1$}}}\big(#2\>\!,#3\big)}
\newcommand{\homol}[3]{H_{{\lower1pt\hbox{$\scriptstyle#1$}}}(#2\>\!,#3)}
\newcommand{\homolog}[2]{H_{{\lower1pt\hbox{$\scriptstyle#1$}}}(#2)}
\newcommand{\IND}{\operatorname{Ind}}
\newcommand{\RES}{\operatorname{Res}}
\newcommand{\mono}{\rightarrowtail}
\newcommand{\epi}{\twoheadrightarrow}
\newcommand{\eg}{{\underline EG}}
\newcommand{\uueg}{{\underline{\underline E}}G}
\newcommand{\Ab}{\mathfrak{Ab}}
\newcommand{\OFG}{\mathcal O_{\mathcal F}G}
\newcommand{\frakF}{\mathfrak{F}}
\newcommand{\frakG}{\mathfrak{G}}
\DeclareMathOperator{\TOR}{Tor}
\newcommand{\calO}{\mathcal O}
\newcommand{\ModOFG}{\mathop{{\operator@font
Mod\text{-}}\calO_{\frakF}G}}
\newcommand{\OFGMod}{\mathop{\calO_{\frakF}G\text{-}{\operator@font
Mod}}}
\newcommand{\ModOGG}{\mathop{{\operator@font
Mod\text{-}}\calO_{\frakG}G}}
\newcommand{\OGGMod}{\mathop{\calO_{\frakG}G\text{-}{\operator@font
Mod}}}
\newcommand{\OfG}{\OC{G_{1}}{\frakF_{1}}}
\newcommand{\OgG}{\OC{G_{2}}{\frakF_{2}}}
\DeclareMathOperator{\GSet}{\mathit{G}-\mathfrak{Set}}
\newcommand{\Fall}{\frakF_{\operator@font all}}
\newcommand{\Ffin}{\frakF_{\operator@font fin}}
\newcommand{\Fvc}{\frakF_{\operator@font vc}}
\newcommand{\Fic}{\frakF_{\operator@font ic}}
\newcommand{\Ffg}{\frakF_{\operator@font fg}}
\newcommand{\Fpc}{\frakF_{\operator@font pc}}
\newcommand{\Fab}{\frakF_{\operator@font ab}}
\newcommand{\Fvpc}{\frakF_{\operator@font vpc}}
\newcommand{\Fvab}{\frakF_{\operator@font vab}}
\newcommand{\OC}[2]{\mathop{\mathcal{O}_{#2}#1}\nolimits}
\renewcommand{\OFG}{\OC{G}{\frakF}}
\newcommand{\OGG}{\OC{G}{\frakG}}
\DeclareMathOperator{\mor}{mor}
\newcommand{\longto}{\longrightarrow}
\DeclareMathOperator{\vcd}{vcd}
\DeclareMathOperator{\PD}{pd}
\DeclareMathOperator{\FLD}{fld}
\DeclareMathOperator{\id}{id}
\renewcommand{\coprod}%
{\mathop{\rotatebox[origin=c]{180}{$\displaystyle\prod$}}\limits}
\let\uZ\uz
\let\isom\iso
\newcommand{\uu}[1]{\underline{\underline{#1}}}
\DeclareMathOperator{\lcm}{lcm}
\renewcommand{\:}{\mathord{:}\hspace*{0.8ex plus .25ex minus .1ex}}
\title[Elementary amenable groups]{On the classifying space for the
family of virtually cyclic subgroups for elementary amenable groups}
\author{Martin G.~ Fluch} \address{Department of Mathematics,
Bielefled University, Postbox 100131, 33501 Bielefeld, Germany }
\email{mfluch@math.uni-bielfeld.de }
\author{Brita  E.~A.~Nucinkis}
\address{School of Mathematics, University of Southampton,
Southampton,
SO17 1BJ, United Kingdom}
\email{B.E.A.Nucinkis@soton.ac.uk}
\date{\today} 
\keywords{}
\subjclass[2000]{
20J05}
\begin{document}

\begin{abstract} 
    We show that every elementary amenable group that has a bound on
    the orders of its finite subgroups admits a finite dimensional
    model for $\uueg$, the classifying space for actions with
    virtually cyclic isotropy.
\end{abstract}

\maketitle

\thispagestyle{empty}


\section{Introduction}

Classifying spaces with isotropy in a family have been studied for a
while; most of the research has focussed on $\eg$, the classifying
space with finite isotropy~\cites{lueckbook, lueck, luecksurvey}.
Finiteness conditions for $\eg$ for elementary amenable groups are
very well understood~\cites{fn, kmn}.  Finding manageable models for
$\uueg$ has been shown to be much more elusive.  In~\cite{jpl} it was
conjectured that the only groups admitting a finite type model for
$\uueg$ are virtually cyclic, and this was proved for hyperbolic
groups.  In~\cite{KoMN} it was shown that this conjecture also holds
for elementary amenable groups.  As far as finite dimensional models
are concerned, only a little more is known.  So far manageable models
have been found for crystallographic groups~\cite{lafontortiz},
polycyclic-by-finite groups~\cite{lueckweiermann}, hyperbolic groups
\cite{jpl} and ${\rm CAT(0)}$-groups~\cite{farley-10, luck-09}.
Adapting the construction of~\cite{jpl}, the first
author~\cite{fluch-11} has recently found a good model in the case
when $G$ is a certain type of HNN-extension, including extensions of
the form $G = A \rtimes \Z$, where the generator of $\Z$ acts freely
on the non-trivial elements of an abelian group $A$.  Utilising this
construction we prove:

\begin{Main Theorem}
    Let $G$ be an elementary amenable group with finite Hirsch length.
    If $G$ has a bound on the orders of its finite subgroups, then $G$
    admits a finite dimensional model for $\uueg$.
\end{Main Theorem}

The proof of this fact is algebraic, and uses finiteness
conditions in Bredon cohomology.  Bredon cohomology takes the place of
ordinary cohomology when studying classifying spaces with isotropy in
a family of subgroups.  We give a brief introduction into Bredon
cohomology in Section 2 and then move on to discussing dimensions in
Bredon cohomology for extensions, directed unions and direct products
of groups.  We also consider the behaviour of the Bredon cohomological
dimension when changing the family of subgroups.

A further crucial ingredient is Hillman--Linnell's and Wehrfritz'
\cites{hillmanlinnell, wehrfritz} characterisation of elementary
amenable groups as locally finite-by-soluble-by-finite groups.  This
allows us to reduce the problem to torsion-free abelian-by-cyclic
groups.  We show, Proposition \ref{ab-by-cyc}, that a torsion-free
abelian-by-cyclic group of finite Hirsch length has finite
Bredon-cohomological dimension bounded by a recursively defined
integer only depending on the Hirsch length.  In Section 6 this result
is extended to torsion-free nilpotent-by-abelian groups, which allows
us to prove the Main Theorem in Section 7.


\section{Background on Bredon cohomology}

In this article a \emph{family} $\frakF$ of subgroups of a group $G$
stands for a non-empty set of subgroups of $G,$ which is closed under
conjugation and taking finite intersections.  Common examples are the
\emph{trivial} family of subgroups $\frakF = \{1\}$, the family
$\Ffin(G)$ of all finite subgroups of $G$ and the family $\Fvc(G)$ of
all virtually cyclic subgroups of $G$.

Let $\frakF$ be a family of subgroups of $G$ and $K\leq G$ and put
\begin{equation*}
    \frakF\cap K = \{ H\cap K \mid H\in \frakF\},
\end{equation*}
which is a family of subgroups of $K$.  Now let $\frakF_{1}$ and
$\frakF_{2}$ be families of subgroups of some groups $G_{1}$ and
$G_{2}$ respectively.  Here we put
\begin{equation*}
    \frakF_{1}\times \frakF_{2} = \{ H_{1}\times H_{2} \mid 
    H_{1}\in \frakF_{1} \text{ and } H_{2}\in\frakF_{2}\}.
\end{equation*}
This is a family of subgroups of $G_{1}\times G_{2}$.  Finally, for
any family $\frakF$ of subgroups of $G$ we can define its
\emph{subgroup completion} $\bar\frakF$ as
\begin{equation*}
    \bar\frakF = \{ H\leq K \mid K\in \frakF\}.
\end{equation*}
That is, $\bar\frakF$ is the smallest family of subgroups of $G$ which
contains $\frakF$ and is closed under forming subgroups.

Given a non-empty $G$-set $X,$ we denote by $\frakF(X)$ the collection
of all its isotropy groups.  In general this is not a family of
subgroups as it may not be closed under finite intersections.

\medskip

Bredon cohomology was introduced for finite groups by
Bredon~\cite{bredon-67} and it has been generalised to arbitrary
discrete groups by Lück~\cite{lueckbook}.  It is the natural choice for a
cohomology theory to study classifying spaces with stabilisers in a
prescribed family~$\frakF$ of subgroups.  The reader is referred to
Lück's book~\cite{lueckbook} and the introductory chapters in Mislin's
survey \cite{mislin-03} for standard facts and definitions.  We shall,
however include those definitions and results on Bredon cohomology needed
later on.

\medskip

Given a group $G$, the \emph{orbit category $\calO G$} is defined as
follows: objects are the transitive $G$-sets~$G/H$ with $H\leq G$; the
morphisms of $\calO G$ are all $G$-maps $G/H \to G/K$.  For a family
$\frakF$ of subgroups of~$G$, the \emph{orbit category $\OFG$} is the
full subcategory of $\calO G,$ which has as objects the transitive
$G$-sets $G/H$ with $H\in \frakF$.

An \emph{$\OFG$-module}, or \emph{Bredon module}, is a functor $M\:
\OFG\to \Ab$ from the orbit category to the category of abelian
groups.  If the functor $M$ is contravariant, $M$ is said to be a
\emph{right} Bredon module; if $M$ is covariant we call it a
\emph{left} Bredon module.  A natural transformation $f\: M\to N$
between two $\OFG$-modules of the same variance is called a morphism
of $\OFG$-modules.  If $M$ is a right, respectively left, Bredon
module and $\varphi$ a morphism, then we may abbreviate $M(\varphi)$
by $\varphi^{*}$ and $\varphi_{*}$ respectively.

The \emph{trivial $\OFG$-module} is denoted by
$\underline\Z_{\frakF}.$ It is given by $\underline\Z(G/H) = \Z$ and
$\underline\Z(\varphi)=\id$ for all objects and morphisms of $\OFG$.

The categories of right, respectively left, $\OFG$-modules and their
morphisms are denoted by $\ModOFG$ and $\OFGMod$ respectively.  These
are functor categories and therefor inherit a number of properties
from the category $\Ab$.  For example, a sequence $L \to M\to N$ of
Bredon modules is exact if and only if when evaluated at every $G/H\in
\OFG,$ we obtain an exact sequence $L(G/H) \to M(G/H)\to N(G/H)$ of
abelian groups.

Since $\Ab$ has enough projectives, so does $\ModOFG$. Therefore we can
define homology functors in $\ModOFG$.  Denote by $\mor_{\frakF}(M,N)$ the morphisms between two Bredon modules $M$ and $N.$ Hence the
bi-functor
\begin{equation*}
    \mor_{\frakF}(?, ??)\: \ModOFG\times\ModOFG\to \Ab
\end{equation*}
has derived functors, denoted by $\EXT_{\frakF}^{*}(?,??)$.  The
categorical tensor product~\cite{schubert-70a}*{pp.~45ff.} gives rise
to a tensor product
\begin{equation*}
    ? \otimes_{\frakF} ??\: \ModOFG\times \OFGMod\to \Ab
\end{equation*}
\emph{over the orbit category $\OFG$}~\cite{lueckbook}*{p.~166}.  Its
derived functors are denoted by $\TOR^{\frakF}_{*}(?,??)$.

One can also define a tensor product over $\Z$ as follows: For
$\OFG$-modules $M$ and $N$ of the same variance, we set $(M\otimes
N)(G/K)=M(G/K)\otimes N(G/K),$ see also \cite{lueckbook}*{p.~166}.

\medskip

We shall now describe the basic properties of free, projective and
flat $\OFG$-modules.  Consider the following right Bredon module:
$\Z[?,G/K]_{G}$ with $K\in\frakF$.  Evaluated at $G/H$ this functor is
the free abelian group $\Z[G/H,G/K]_{G}$ on the set $[G/H,G/K]_{G}$ of
$G$-maps $G/H\to G/K$.  These modules are free,
cf.~\cite{lueckbook}*{p.~167}, and can be viewed as the building
blocks of the free right Bredon modules.  In general a free object in
$\ModOFG$ is of the form $\Z[?,X]_{G}$ where~$X$ is a $G$-set
with~$\frakF(X)\subset \frakF$.  Free left Bredon modules are defined
analogously: they are obtained from modules of the form
$\Z[G/K,??]_G$, where $K\in \frakF.$ A projective $\OFG$-modules is
then defined as a direct summand of a free module.

Note that the construction of free $\OFG$-modules is functorial in the
second variable. In particular, we have a functor
\begin{equation*}
    \Z[?,??]_{G}\: \GSet\to \ModOFG
\end{equation*}
defined by $\Z[?,??]_G(X)=\Z[?,X]_{G}$ for arbitrary $G$-sets $X$.

A Bredon module $M$ is \emph{finitely generated,} if it is the 
homomorphic image of a free Bredon module $\Z[?,X]_{G},$ where $X$ 
has only finitely many $G$-orbits.

A right $\OFG$-module $M$ is called \emph{flat} if it is
$?\otimes_{\frakF}N$-acyclic for every left $\OFG$-module $N$.  This
is the case if and only if the functor $\TOR^{\frakF}_{1}(M,?)$ is
trivial.  Flat Bredon modules share many properties with ordinary flat
modules.  In particular,

\begin{proposition}
    \cite{nucinkis-04}*{Theorem~3.2}
    \label{prop:Lazard-for-Bredon}
    A right $\OFG$-module $M$ is flat if and only if it is the 
    filtered colimit of finitely generated free $\OFG$-modules.
\end{proposition}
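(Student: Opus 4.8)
The statement is the Bredon-module analogue of Lazard's theorem on flat modules over a ring, so the plan is to carry Lazard's argument over to the functor category $\ModOFG$. The key structural observation is that $\ModOFG$ is the category of right modules over the small preadditive category obtained by $\Z$-linearising $\OFG$, in which the representable modules $\Z[?,G/K]_{G}$ (with $K\in\frakF$) play the role of the free modules of rank one; by the Yoneda isomorphism $\mor_{\frakF}(\Z[?,G/K]_{G},M)\iso M(G/K)$ they form a generating set of small projectives, and a finitely generated free module is precisely a finite direct sum $\bigoplus_{i}\Z[?,G/K_{i}]_{G}$.

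One implication is easy. Each representable is flat, since $\Z[?,G/K]_{G}\otimes_{\frakF}N\iso N(G/K)$ is exact in $N$, and a finite direct sum of flat modules is again flat. As $?\otimes_{\frakF}N$ commutes with filtered colimits and filtered colimits of abelian groups are exact, the derived functors $\TOR^{\frakF}_{1}(?,N)$ also commute with filtered colimits; hence a filtered colimit of flat modules has vanishing $\TOR^{\frakF}_{1}$ and is flat.

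For the converse, let $M$ be flat. I would form the comma category $\mathcal C$ whose objects are the morphisms $\phi\:F\to M$ with $F$ finitely generated free and whose morphisms $(F,\phi)\to(F',\phi')$ are the maps $u\:F\to F'$ with $\phi'u=\phi$. There is a canonical map $\theta\:\colim_{\mathcal C}F\to M$, and the goal is to show that $\mathcal C$ is filtered and that $\theta$ is an isomorphism, which exhibits $M$ as a filtered colimit of finitely generated free modules. Surjectivity of $\theta$ is immediate from Yoneda: an element $m\in M(G/K)$ is in the image of the object $m\:\Z[?,G/K]_{G}\to M$ of $\mathcal C$ that corresponds to it.

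Everything else reduces to the following characterisation, which is the technical heart: if $M$ is flat, then every morphism from a finitely presented $\OFG$-module to $M$ factors through a finitely generated free module. Granting this, the basic relation-lifting statement — given finitely generated free modules $F,P$ with $\phi\:F\to M$ and $\rho\:P\to F$ satisfying $\phi\rho=0$, there exist $u\:F\to F'$ and $\phi'\:F'\to M$ with $\phi'u=\phi$ and $u\rho=0$ — follows by applying the characterisation to the induced map $\coker\rho\to M$ on the finitely presented module $\coker\rho$ and precomposing $u$ with the projection $F\to\coker\rho$. Filteredness of $\mathcal C$ then follows by applying relation-lifting with $\rho=u_{1}-u_{2}$ to equalise two parallel maps $u_{1},u_{2}\:(F,\phi)\to(F',\phi')$, and injectivity of $\theta$ follows by applying it to a $\rho$ representing an element of $\ker\phi$, which thereby dies further along the system. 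The main obstacle is proving the characterisation itself: one must translate the vanishing of $\TOR^{\frakF}_{1}(M,?)$ into the existence of such factorisations, and this requires unwinding the coend defining $\otimes_{\frakF}$ together with the structure of finitely presented modules in terms of the $\Z$-linearised morphism groups $\Z[G/H,G/K]_{G}$, so that generators and relations distributed across the various objects $G/K$ of the orbit category can be handled simultaneously.
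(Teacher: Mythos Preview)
The paper does not prove this proposition; it is quoted as \cite{nucinkis-04}*{Theorem~3.2} and used as a black box, so there is no in-paper argument to compare against. Your outline is the standard Lazard proof transported to the functor category $\ModOFG$, viewed as right modules over the $\Z$-linearisation of $\OFG$ (a ``ring with several objects'' in Mitchell's sense), and this is the correct route. The easy direction is fine as stated, and for the hard direction you correctly isolate the crux---that every morphism from a finitely presented module into a flat $M$ factors through a finitely generated free---and you derive filteredness of $\mathcal C$ and bijectivity of $\theta$ from it in the right way.

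What is missing is the proof of that crux itself. You describe the shape of the difficulty (unwinding the coend, handling generators and relations spread across several objects $G/K$) but do not carry it out. The argument does go through: given a finite presentation $P\to F\to C\to 0$ with $P,F$ finitely generated free and a map $C\to M$, one builds from the relation data a finitely presented \emph{left} $\OFG$-module $N$ with $\TOR^{\frakF}_{1}(M,N)=0$, and exactness of $M\otimes_{\frakF}(\text{presentation of }N)$ then yields precisely the equational criterion that produces the factorisation $F\to F'\to M$ killing $P$. This is the step that needs to be written down; once it is, your sketch becomes a complete proof, and indeed this is essentially how the cited reference proceeds.
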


\medskip

Given a covariant functor $F\: \OfG \to \OgG$ between orbit
categories, one can now define induction and restriction functors
along $F$, see~\cite{lueckbook}*{p.~166}:
\begin{equation*}
    \arraycolsep0.2ex
    \begin{array}{lccc}
    \IND_{F}\: & \OfG & \to &\OgG \\
    & M(?) & \mapsto & M(?)\otimes_{\frakF_{1}} \mor_{\frakF_{2}}(??,F(?))
    \end{array}
\end{equation*}
and
\begin{equation*}
    \arraycolsep0.25ex
    \begin{array}{lccc}
    \RES_{F}\: & \OgG & \to &\OfG \\
    & M(??) & \mapsto & M\circ F(??)
    \end{array}
\end{equation*}

Since these functors are adjoint to each others, $\IND_{F}$ commutes
with arbitrary colimits~\cite{mac-lane-98}*{pp.~118f.} and preserves
free and projective Bredon modules~\cite{lueckbook}*{p.~169}.

\pagebreak[3]

\begin{lemma}
    \label{lem:induction-preserves-flats}
    Induction along $F$ preserves flat right Bredon modules.
\end{lemma}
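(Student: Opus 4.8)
The plan is to reduce everything to the Lazard-type characterisation of flatness in Proposition~\ref{prop:Lazard-for-Bredon} together with the two properties of $\IND_{F}$ recorded just above the statement: that it commutes with arbitrary colimits and that it preserves free Bredon modules.

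Let $M$ be a flat right $\OfG$-module. By Proposition~\ref{prop:Lazard-for-Bredon} I would first write $M$ as a filtered colimit $M = \colim_{i} M_{i}$ of finitely generated free right $\OfG$-modules. Since $\IND_{F}$ is a left adjoint, it preserves this colimit, so $\IND_{F} M = \colim_{i} \IND_{F} M_{i}$ over the same, still filtered, index category.

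It then remains to understand the modules $\IND_{F} M_{i}$. Each $M_{i}$ is a finite direct sum of representable free modules $\Z[?,G_{1}/H]_{G_{1}}$ with $H \in \frakF_{1}$, and $\IND_{F}$ commutes with finite direct sums, so it suffices to treat one such summand. Here the key computation is $\IND_{F}(\Z[?,G_{1}/H]_{G_{1}}) \iso \Z[?,F(G_{1}/H)]_{G_{2}}$, which follows from the defining formula for $\IND_{F}$ together with the Yoneda lemma for the tensor product over $\OfG$. Because $F(G_{1}/H)$ is a single object of $\OgG$, i.e. one transitive $G_{2}$-set, the right-hand side is free on a single orbit and hence finitely generated. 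Thus each $\IND_{F} M_{i}$ is finitely generated free, and $\IND_{F} M$ is exhibited as a filtered colimit of finitely generated free right $\OgG$-modules. A second application of Proposition~\ref{prop:Lazard-for-Bredon} then shows that $\IND_{F} M$ is flat.

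The only content beyond the cited properties is the identification $\IND_{F}(\Z[?,G_{1}/H]_{G_{1}}) \iso \Z[?,F(G_{1}/H)]_{G_{2}}$ and, with it, the observation that finite generation is preserved: the excerpt records only that $\IND_{F}$ preserves freeness in general, so the mild obstacle is checking that the number of orbits stays finite, which is exactly what the single-object image $F(G_{1}/H)$ guarantees. If one prefers to avoid tracking finite generation altogether, there is a slicker route: free Bredon modules are flat, a filtered colimit of flat modules is again flat since $\TOR^{\frakF_{2}}_{1}$ is computed objectwise and commutes with filtered colimits, and $\IND_{F} M = \colim_{i} \IND_{F} M_{i}$ is then a filtered colimit of the free, hence flat, modules $\IND_{F} M_{i}$.
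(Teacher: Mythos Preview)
Your proof is correct, and the ``slicker route'' you sketch at the end is precisely the paper's one-line argument: write $M$ as a filtered colimit of finitely generated free modules via Proposition~\ref{prop:Lazard-for-Bredon}, push the colimit through $\IND_{F}$, and conclude flatness because $\TOR_{1}^{\frakF_{2}}(?,N)$ commutes with filtered colimits. Your main route, reapplying Lazard after verifying via Yoneda that $\IND_{F}$ takes finitely generated frees to finitely generated frees, is a harmless variant that trades the appeal to $\TOR_{1}$ commuting with filtered colimits for a second use of the Lazard criterion; the paper simply skips this detour.
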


\begin{proof}
    This follows  from the fact that both $\IND_{F}$ and
    $\TOR_{1}^{\frakF}(?,N)$ commute with filtered colimits and from
    Proposition~\ref{prop:Lazard-for-Bredon}.
\end{proof}

Let $\frakF \subset \frakG$ are two families of subgroups of a group 
$G$,  then the inclusion of the respective orbit categories is denoted by:
\begin{equation*}
    I\: \OFG\to \OGG
\end{equation*}
If $K$ is a subgroup of $G$ such that $\frakF\cap K$ is contained
in $\frakF$, then we consider the following functor
\begin{equation*}
    \arraycolsep0.2ex
    \begin{array}{lccc}
        I_{K}\: & \OC{K}{\frakF\cap K} &\to& \OFG \\
        & K/H & \mapsto & G/H.
    \end{array}
\end{equation*}

Note that for every non-empty $K$-sets $X$ with $\frakF(X)\subset
\frakF\cap K$ the functor $I_{K}$ can be extended by mapping each
$K$-orbit separately.

\begin{lemma}
    \cite{symonds-05}*{Lemma 2.9}
    \label{lem:symonds}
    Let $K$ be a subgroup of $H$ such that $\frakF\cap K$ is a 
    non-empty subset of $\frakF$. Then induction with $I_{K}$ is an 
    exact functor.
\end{lemma}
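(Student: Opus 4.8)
The plan is to reduce the claim to the flatness of a suitable representing module and then to exhibit that module as free. Recall that $\IND_{I_K}$ is left adjoint to $\RES_{I_K}$, so it commutes with colimits and is in particular right exact; moreover, evaluated at an object $G/L$ of $\OFG$ it is computed by the tensor product
\[
    \IND_{I_K}M(G/L) \iso M \otimes_{\frakF\cap K} \mor_{\frakF}(G/L, I_K(-)),
\]
where $\mor_{\frakF}(G/L, I_K(-))$ is the left $\OC{K}{\frakF\cap K}$-module $K/H \mapsto \mor_{\frakF}(G/L, G/H) = \Z[G/L, G/H]_G$. Since exactness of a sequence of Bredon modules is tested objectwise, $\IND_{I_K}$ is exact as soon as this left module is flat for every $L\in\frakF$. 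I would prove the stronger statement that it is in fact free.

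The heart of the argument is a Mackey-style decomposition. Fix $L\in\frakF$ and $H\in\frakF\cap K$; since $H\leq K$ there is a canonical $L$-equivariant projection $G/H \to G/K$, and $\Z[G/L, G/H]_G$ is the free abelian group on $(G/H)^{L}$. Decomposing the $L$-set $G/H$ through this projection along the double cosets $L\backslash G/K$, one checks that every $L$-fixed point of $G/H$ maps to an $L$-fixed point of $G/K$, so that only those classes $Lg_iK$ with $g_i^{-1}Lg_i\leq K$ contribute, and over such a class the fibre of $(G/H)^{L}\to(G/K)^{L}$ is naturally identified with $(K/H)^{g_i^{-1}Lg_i}$. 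Writing $L_i = g_i^{-1}Lg_i$, each $L_i$ is a conjugate of $L$, hence lies in $\frakF$, and being contained in $K$ it lies in $\frakF\cap K$; this is exactly the point at which the hypothesis $\frakF\cap K\subseteq\frakF$ (which also makes $I_K$ well defined into $\OFG$) is needed. Collecting the fibres yields a natural isomorphism of left $\OC{K}{\frakF\cap K}$-modules
\[
    \mor_{\frakF}(G/L, I_K(-)) \iso \bigoplus_{i}\, \Z[K/L_i, -]_K,
\]
the right-hand side being a free module.

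A free left Bredon module is flat, a direct sum of flats is flat, and tensoring with a flat left module over the orbit category is exact; combined with the objectwise computation of $\IND_{I_K}$ above, this shows that $\IND_{I_K}$ is exact, the right-exactness from adjunction being subsumed. I expect the main obstacle to be the second paragraph: one must verify that the fibrewise bijection $(G/H)^{L}\iso\coprod_i (K/H)^{L_i}$ is natural in $H$, that is, compatible with postcomposition by the morphisms $G/H\to G/H'$ that arise as images under $I_K$ of morphisms of $\OC{K}{\frakF\cap K}$, so that it upgrades from a family of bijections of sets to an isomorphism of left modules. The bookkeeping of double-coset representatives and the check that the isotropy groups $L_i$ remain objects of the source orbit category are precisely where the hypotheses must be used with care.
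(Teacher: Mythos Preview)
The paper does not prove this lemma; it simply quotes it from \cite{symonds-05}*{Lemma~2.9}, so there is no in-paper argument to compare against. Your approach---showing that the left $\OC{K}{\frakF\cap K}$-module $\Z[G/L, I_K(-)]_G$ is free via the Mackey-type double coset decomposition $(G/H)^{L}\cong\coprod_i (K/H)^{L_i}$, and deducing that tensoring with it is exact---is correct and is essentially the argument Symonds gives. Your identification of naturality in $H$ as the point requiring care is apt, and your sketch of it is right once one fixes double coset representatives for $L\backslash G/K$ independently of $H$.

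One small remark on where the hypothesis enters: the step ``$L_i\in\frakF$ and $L_i\leq K$, hence $L_i\in\frakF\cap K$'' follows directly from the definition $\frakF\cap K=\{H\cap K\mid H\in\frakF\}$, since $L_i=L_i\cap K$; you do not need $\frakF\cap K\subseteq\frakF$ there. That hypothesis is used only to ensure that $I_K$ lands in $\OFG$ (so that $\mor_{\frakF}(G/L,I_K(-))$ is defined at all), which you do note parenthetically. This does not affect the validity of your argument.
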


\medskip

We conclude this section with a collection of facts concerning
dimensions both generally and for the family of virtually cyclic
subgroups.  The \emph{Bredon cohomological dimension} $\cd_{\frakF}G$
of a group $G$ with respect to the family $\frakF$ of subgroups is the
projective dimension $\PD_{\frakF}\uZ$ of the trivial $\OFG$-module
$\uZ$.  Similarly, the \emph{Bredon homological dimension}
$\hd_{\frakF}G$ is the flat dimension $\FLD_{\frakF}\uZ$ of the
trivial $\OFG$-module.  For $\frakF = \Fvc(G)$ we shall use the
following notation: $\uucd G = \cd_{\frakF}G$ and $\uuhd G
=\hd_{\frakF}G.$

The cellular chain complex of a model for $E_{\frakF}G$ yields a free
resolution of the trivial $\OFG$-module
$\uZ$~\cite{lueckbook}*{pp.~151f.}.  In particular, this implies that
the Bredon geometric dimension $\gd_{\frakF}G$, the minimal dimension
of a model for $E_{\frakF}G$, is an upper bound for $\cd_{\frakF}G$.
Since projectives are flat this implies that 
\begin{equation*}
    \hd_{\frakF}G\leq \cd_{\frakF}G \leq \gd_{\frakF}G.
\end{equation*}

Furthermore, L\"uck and Meintrupp gave an upper bound of
$\gd_{\frakF}G$ in terms of $\cd_{\frakF}G$ as follows; the case
$\frakF =\Ffin(G)$ was shown in \cite{lueckbook}:

\begin{proposition}
    \cite{lm}*{Theorem~0.1~(i)}
    \label{prop:lm}
    Let $G$ be a group. Then
    \begin{equation*}
        \gd_{\frakF}G\leq\max(3,\cd_{\frakF}G).
    \end{equation*}
\end{proposition}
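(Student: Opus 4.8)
The plan is to transcribe the classical Eilenberg--Ganea argument into the equivariant setting, realising an algebraic (projective) resolution of the trivial module $\uZ$ of length $n = \cd_{\frakF}G$ by a $G$-CW-model for $E_{\frakF}G$ of dimension $\max(3,n)$. Recall that the cellular chain complex of any $G$-CW-model for $E_{\frakF}G$ is a free resolution of $\uZ$ over $\OFG$, with one free generator $\Z[?,G/H]_{G}$ for each equivariant cell of orbit type $G/H$; conversely, the task is to produce such a model of controlled dimension. I would begin from an arbitrary model, which always exists, and rebuild it skeleton by skeleton while keeping track of the induced chain complex.

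First I would construct the $(\max(3,n)-1)$-skeleton inductively. At each stage, attaching equivariant cells $G/H \times D^{k}$ to the current $G$-CW-complex adds free generators to the $k$-th chain group, and the attaching maps are governed by the equivariant homotopy systems of the fixed-point spaces $\{ (X^{(k-1)})^{H} \}_{H \in \frakF}$. The equivariant Whitehead and Hurewicz theorems, applied fixed-point-set by fixed-point-set, let me translate the requirement that each $X^{H}$ be contractible into the requirement that the chain complex be acyclic; once $k \geq 2$ these homotopy systems are abelian and are computed by Bredon homology, so the attaching maps can be chosen to cut down the cycles exactly as prescribed by a chosen resolution.

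The decisive step is the top dimension. Since $\cd_{\frakF}G = n$, in any free resolution the $n$-th syzygy $\ker(\partial_{n-1})$ is a projective $\OFG$-module; to cap off the construction by attaching top cells I must realise this projective module geometrically, i.e.\ as a wedge of orbit cells, which in general is possible only after stabilising it to a free module by an Eilenberg swindle that adds complementary cells one dimension down. This works cleanly provided the relevant skeleton has dimension at least two, so that the equivariant Hurewicz isomorphism identifies the top homotopy system with the top Bredon homology and no secondary obstruction intervenes; this is exactly why the bound reads $\max(3,n)$ rather than $\max(2,n)$.

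I expect the genuine obstacle to be this realisation of a \emph{projective} (rather than free) Bredon module at the top dimension, together with the verification that the resulting fixed-point sets are contractible and not merely homologically trivial. Controlling this simultaneously across the whole diagram of fixed-point spaces $\{X^{H}\}_{H\in\frakF}$, rather than for a single space, is the technical heart of the matter and is where the equivariant obstruction theory of L\"uck and Meintrup does the real work; the dimension-two gap is the equivariant shadow of the still-open Eilenberg--Ganea problem and cannot be removed by these methods.
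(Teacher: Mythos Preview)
The paper does not give its own proof of this proposition: it is quoted verbatim as Theorem~0.1~(i) of L\"uck--Meintrup~\cite{lm} and used as a black box. There is therefore no in-paper argument to compare against.

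That said, your outline is the correct one and is essentially the strategy of L\"uck--Meintrup: an equivariant Eilenberg--Ganea construction in which one builds a model for $E_{\frakF}G$ skeleton by skeleton so that its Bredon cellular chain complex realises a prescribed projective resolution of $\uZ_{\frakF}$, using the equivariant Hurewicz and Whitehead theorems on the diagram of fixed-point sets and an Eilenberg swindle to make the top projective free. You have also correctly identified why the bound is $\max(3,\cd_{\frakF}G)$ rather than $\max(2,\cd_{\frakF}G)$: one needs the space to be simply connected on each fixed-point set before the Hurewicz map identifies homotopy with Bredon homology, and the equivariant analogue of the Eilenberg--Ganea problem in low dimensions is not settled by these methods. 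As a sketch this is fine; turning it into a proof requires the full equivariant obstruction theory developed in~\cite{lueckbook} and~\cite{lm}, which is exactly why the paper cites the result rather than reproving it.
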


Hence, as long as  $\cd_{\frakF}G\geq 3$ or
$\gd_{\frakF}G\geq 4,$ we have equality of these two dimensions.

\medskip

Now suppose  $H$ is a subgroup of $G$ such that $\frakF\cap H$ is a 
non-empty subset of $\frakF$. Then 
\begin{equation*}
    \gd_{\frakF\cap H} H\leq \gd_{\frakF}G \quad \text{and} \quad
    \cd_{\frakF\cap H} H\leq \cd_{\frakF}G.
\end{equation*}

The following result is a consequence of Mart\'inez-P\'erez'
Lyndon--Hochschild--Serre spectral sequence in Bredon (co)homology
\cite{martinez}.  We shall only state the results for the family of
virtually cyclic subgroups.

\begin{proposition}
    \cite{martinez}*{Corollary 5.2}
    \label{prop:martinez}
    Let $N\mono G\epi Q$. Assume there exists $n\in \N$ such that 
    $\uucd H\leq n$ for every $N\leq H\leq G$ with $H/N$ virtually 
    cyclic. Then 
    \begin{equation*}
        \uucd G\leq n + \uucd Q.
    \end{equation*}
\end{proposition}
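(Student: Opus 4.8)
The plan is to obtain the bound from Mart\'inez-P\'erez' Lyndon--Hochschild--Serre spectral sequence \cite{martinez} applied to the extension $N \mono G \epi Q$, taking the family $\Fvc(G)$ on $G$ and $\Fvc(Q)$ on $Q$. Write $\pi\: G \to Q$ for the quotient map. These families are compatible in the sense needed for the spectral sequence, since the image of a virtually cyclic subgroup of $G$ is again virtually cyclic in $Q$. For an arbitrary coefficient module $M$ the spectral sequence then takes the form
\begin{equation*}
    E_{2}^{p,q} = H^{p}_{\Fvc(Q)}\bigl(Q; \underline H^{q}\bigr) \implies H^{p+q}_{\Fvc(G)}(G; M),
\end{equation*}
where $\underline H^{q}$ is the $\OC{Q}{\Fvc(Q)}$-module sending $Q/V$ to the Bredon cohomology $H^{q}_{\Fvc(\pi^{-1}V)}\bigl(\pi^{-1}V; \res{G}{\pi^{-1}V} M\bigr)$ of the preimage of $V$.

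First I would match the fibre groups against the hypothesis. For $V \in \Fvc(Q)$ the preimage $H = \pi^{-1}(V)$ satisfies $N \leq H \leq G$ with $H/N = V$ virtually cyclic, and every $H$ of this shape arises in this way. Because a subgroup of a virtually cyclic group is virtually cyclic, the restricted family is the full one, $\Fvc(G) \cap H = \Fvc(H)$, so $\underline H^{q}(Q/V)$ is genuinely the Bredon cohomology of $H$ for its own virtually cyclic family. The hypothesis $\uucd H \leq n$ therefore forces $\underline H^{q}(Q/V) = 0$ for all $q > n$; as this holds at every object, the coefficient module $\underline H^{q}$ itself vanishes, and hence $E_{2}^{p,q} = 0$ whenever $q > n$.

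For the horizontal direction I would invoke that $\uucd Q = \cd_{\Fvc(Q)} Q = \PD_{\Fvc(Q)} \uZ$ bounds the Bredon cohomology of $Q$ in arbitrary coefficients, so $H^{p}_{\Fvc(Q)}(Q; -)$ vanishes for $p > \uucd Q$; in particular $E_{2}^{p,q} = 0$ for $p > \uucd Q$. Combining the two ranges, $E_{2}^{p,q} = 0$ whenever $p + q > n + \uucd Q$. Since the spectral sequence lies in the first quadrant it converges, and $H^{m}_{\Fvc(G)}(G; M)$ carries a finite filtration whose subquotients are the $E_{\infty}^{p,m-p}$, each a subquotient of the corresponding $E_{2}$-term; hence $H^{m}_{\Fvc(G)}(G; M) = 0$ for $m > n + \uucd Q$. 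As this holds for every $M$, and $\uucd G = \PD_{\Fvc(G)} \uZ$ is the top degree in which $H^{*}_{\Fvc(G)}(G; -)$ is nonzero, we conclude $\uucd G \leq n + \uucd Q$.

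The main obstacle is the identification of the $E_{2}$-page rather than the dimension count: one must confirm that Mart\'inez-P\'erez' spectral sequence applies to the pair of families $(\Fvc(G), \Fvc(Q))$ and that its coefficient system really is the fibre cohomology of the preimages $\pi^{-1}(V)$ --- the equality $\Fvc(G) \cap \pi^{-1}(V) = \Fvc(\pi^{-1}V)$ being exactly what lets the dimension hypothesis on the preimages collapse the rows with $q > n$. Once the $E_{2}$-page is pinned down, the bound on $\uucd G$ is a formal consequence of first-quadrant convergence.
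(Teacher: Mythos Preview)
Your proposal is correct and follows exactly the approach the paper indicates: the paper does not give its own proof of this proposition but cites it as \cite{martinez}*{Corollary~5.2}, noting that it is a consequence of Mart\'inez-P\'erez' Lyndon--Hochschild--Serre spectral sequence in Bredon cohomology. You have supplied the details of that spectral sequence argument --- checking compatibility of the families $\Fvc(G)$ and $\Fvc(Q)$, identifying the fibre terms as $\uucd$ of the preimages $\pi^{-1}(V)$ via $\Fvc(G)\cap \pi^{-1}(V) = \Fvc(\pi^{-1}(V))$, and reading off the dimension bound from first-quadrant convergence --- which is precisely what \cite{martinez} does in the cited corollary.
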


A careful inspection of the terms of the spectral
sequence~\cite{martinez}*{Theorem 4.3} yields the following:

\begin{proposition}
    \label{prop:finkernel}
    Let $F \mono G \epi Q$ be a group extension with $F$ finite.  Then
    \begin{equation*}
        \uucd G = \uucd Q.
    \end{equation*}
\end{proposition}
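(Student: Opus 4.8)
The plan is to read off the statement from the Lyndon--Hochschild--Serre spectral sequence in Bredon cohomology of \cite{martinez}*{Theorem~4.3}, applied to the extension $F\mono G\epi Q$ with the family $\Fvc$, after first noting that the subgroups controlling the fibre direction are themselves virtually cyclic. The inequality $\uucd G\le\uucd Q$ is the easy half and I would extract it directly from the corollary already stated. Indeed, let $F\le H\le G$ be any subgroup with $H/F$ virtually cyclic. Since $F$ is finite, $H$ is an extension of a finite group by a virtually cyclic group, hence itself virtually cyclic; in particular $H\in\Fvc(H)$, a point is a model for $E_{\Fvc}H$, and therefore $\uucd H=0$. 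Applying Proposition~\ref{prop:martinez} with $N=F$ and $n=0$ gives $\uucd G\le\uucd Q$ at once.

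For the reverse inequality I would return to the spectral sequence itself rather than to its corollary. The $E_2$-page of \cite{martinez}*{Theorem~4.3} for $F\mono G\epi Q$ has the shape $E_2^{p,q}=H^{p}_{\bar\frakF}(Q;\underline H^{q})$, where $\bar\frakF$ is the family on $Q$ induced from $\Fvc(G)$ and $\underline H^{q}$ is the Bredon coefficient system on $Q$ whose value at an orbit $Q/(H/F)$ is the $q$-th Bredon cohomology of the intermediate subgroup $H$. By the computation above every such $H$ is virtually cyclic, so $\underline H^{q}=0$ for all $q\ge 1$ and the spectral sequence collapses onto the row $q=0$. This yields natural isomorphisms $H^{n}_{\Fvc}(G;M)\iso H^{n}_{\bar\frakF}(Q;\underline H^{0}(M))$ for every $n$ and every coefficient module $M$, and the equality $\uucd G=\uucd Q$ will follow once the two sides of the coefficient bookkeeping are aligned.

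The step I expect to require the most care is precisely this alignment. First I would verify that the induced family $\bar\frakF$ on $Q$ is exactly $\Fvc(Q)$: images of virtually cyclic subgroups of $G$ are virtually cyclic, and, because $F$ is finite, the preimage in $G$ of a virtually cyclic subgroup of $Q$ is again virtually cyclic, so the quotient map gives the required correspondence between $\Fvc(G)$ and $\Fvc(Q)$. Second, and this is the genuine obstacle, I would need to control the coefficient functor $M\mapsto\underline H^{0}(M)$ on the surviving row. For the direction I still need it suffices to check surjectivity up to the dimension: every right $\OC{Q}{\Fvc}$-module should arise, via inflation along $G\epi Q$, as the degree-zero coefficient of a suitable $\OC{G}{\Fvc}$-module $M$, with $\underline H^{0}$ recovering it (as in the ordinary Lyndon--Hochschild--Serre situation, where the fibre has trivial invariants on the bottom row). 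Granting this, if $n=\uucd Q$ is witnessed by $H^{n}_{\Fvc}(Q;B)\ne 0$ for some $B$, choosing $M$ with $\underline H^{0}(M)=B$ forces $H^{n}_{\Fvc}(G;M)\ne 0$, whence $\uucd Q\le\uucd G$. Combined with the first paragraph this gives the asserted equality, the remaining degeneration arguments being routine.
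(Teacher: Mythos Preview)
Your approach is correct and is essentially the paper's: both reduce to the Mart\'{\i}nez-P\'erez spectral sequence \cite{martinez}*{Theorem~4.3} for the extension $F\mono G\epi Q$, after observing that every intermediate $H$ with $F\leq H$ and $H/F$ virtually cyclic is itself virtually cyclic. The paper simply records that the hypotheses of \cite{martinez}*{Corollary~4.5} are met (mirroring \cite{nucinkis-04}*{Theorem~5.5} for the family of finite subgroups), while you unpack by hand the collapse onto the $q=0$ row and the inflation step; the verification that $\bar\frakF=\Fvc(Q)$ and that $\underline H^{0}$ applied to an inflated module returns the original is precisely the content packaged in that corollary, so nothing is missing.
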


\begin{proof}
    The proof is identical to that for the corresponding result for
    the family of all finite subgroups \cite{nucinkis-04}*{Theorem
    5.5}.  One checks that the families in question satisfy the
    conditions of~\cite{martinez}*{Corollary 4.5}.
\end{proof}

Now suppose $G$ is a finite extension of a group $H$.  Lück has
constructed a model for $\uu EG$ from a model for $\uu EH$
\cite{lueck}.  This yields the following bound for $\uugd G$:

\begin{proposition}
    \label{prop:gd-for-finite-index}
    \cite{lueck}*{Theorem~2.4} Let $H$ be a finite index subgroup of
    $G$.  Then 
    \begin{equation*}
	\uugd G\leq |G:H| \cdot \uugd H.
    \end{equation*}
    In particular, $\uugd G$ is finite if and only if $\uugd H$ is
    finite.
\end{proposition}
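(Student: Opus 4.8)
The plan is to build a model for $\uu EG$ by coinducing a model for $\uu EH$ along the inclusion $H\le G$, which recovers L\"uck's finite-index construction. First I would record the key compatibility of families: since every subgroup of a virtually cyclic group is virtually cyclic and every finite-index overgroup of a virtually cyclic group is again virtually cyclic, one has $\Fvc(G)\cap H=\Fvc(H)$. Hence a minimal-dimensional model $Y$ for $E_{\Fvc(H)}H=\uu EH$, of dimension $d=\uugd H$, is at hand, and we may assume $d<\infty$ since otherwise the inequality is vacuous.

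Next, writing $n=|G:H|$, I would form the coinduced $G$-space
\begin{equation*}
    Z=\operatorname{map}_{H}(G,Y)=\{\,f\colon G\to Y \mid f(hx)=hf(x)\text{ for all }h\in H,\ x\in G\,\},
\end{equation*}
on which $G$ acts by $(g\cdot f)(x)=f(xg)$. Choosing representatives of the $n$ right cosets $H\backslash G$ identifies $Z$ with the product $Y^{n}$ as a space, and the $G$-action permutes these factors through $G/H$ while acting by the $H$-action inside each factor. The hard part will be to verify carefully that this endows $Z$ with the structure of a $G$-CW complex whose dimension is exactly $nd$; here one must check that the product CW structure is well behaved and compatible with the $G$-action, which is precisely the content of the coinduction construction.

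The conceptual core is then the fixed-point computation. For $K\le G$ the group $K$ acts on $H\backslash G$ from the right with orbits the double cosets in $H\backslash G/K$, the stabiliser of $Hg$ being $K\cap g^{-1}Hg$; unwinding the defining equivariance of $Z$ yields
\begin{equation*}
    Z^{K}\cong\prod_{g\in H\backslash G/K} Y^{\,gKg^{-1}\cap H}.
\end{equation*}
If $K$ is virtually cyclic then each $gKg^{-1}\cap H$ is virtually cyclic, hence lies in $\Fvc(H)$, so every factor is contractible and so is the finite product $Z^{K}$. If $K$ is not virtually cyclic then the factor indexed by $g=1$ is $Y^{K\cap H}$, and since $K\cap H$ has finite index in $K$ it cannot be virtually cyclic, whence $Y^{K\cap H}=\varnothing$ and $Z^{K}=\varnothing$. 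Thus $Z$ is a model for $\uu EG$ of dimension $nd$, giving $\uugd G\le|G:H|\cdot\uugd H$.

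Finally, for the stated equivalence I would invoke the monotonicity recorded just above the statement: with $\frakF=\Fvc(G)$ one has $\frakF\cap H=\Fvc(H)$, so $\uugd H=\gd_{\Fvc(H)}H\le\gd_{\Fvc(G)}G=\uugd G$. Combined with the displayed inequality, finiteness of either of $\uugd G$ or $\uugd H$ forces finiteness of the other.
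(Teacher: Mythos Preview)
The paper does not actually prove this proposition; it is quoted directly from L\"uck's paper with only the one-line remark that ``L\"uck has constructed a model for $\uu EG$ from a model for $\uu EH$.'' Your sketch correctly reconstructs that very construction: the coinduced space $\operatorname{map}_{H}(G,Y)\cong Y^{n}$ with the twisted permutation action, the double-coset decomposition $Z^{K}\cong\prod_{g\in H\backslash G/K} Y^{\,gKg^{-1}\cap H}$, and the dichotomy (virtually cyclic $K$ gives a finite product of contractible factors; otherwise the factor at $g=1$ is empty since $K\cap H$ has finite index in $K$) are all correct, as is the monotonicity argument for the converse direction. The one phrase to tighten is ``whose dimension is exactly $nd$'': you only need, and only get immediately, that the product $G$-CW structure has dimension at most $nd$, which suffices for the inequality.
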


In light of Proposition~\ref{prop:martinez} one needs to understand
the behaviour of the Bredon dimensions for the family of virtually
cyclic subgroups under extensions with virtually cyclic quotients.
In~\cite{fluch-11} the first author gave bounds for certain infinite
cyclic extensions:

\begin{proposition}
    \label{prop:fluch}
    \cite{fluch-11}*{Theorem~15}
    Let $G = B\rtimes \Z$ and assume that $\Z$ acts freely via 
    conjugation on the conjugacy classes of non-trivial elements 
    of~$B$. Then
    \begin{equation*}
        \uugd G \leq \uugd B + 1.
    \end{equation*}
\end{proposition}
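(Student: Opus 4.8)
The plan is to construct a model for $\uueg$ in two stages by the pushout technique of Juan-Pineda--Leary \cite{jpl} and L\"uck--Weiermann \cite{lueckweiermann}. Let $t$ be a generator of the quotient $\Z$, and let $\frakF$ be the family of those virtually cyclic subgroups of $G$ whose image in $\Z$ is trivial; as $B$ is normal these are exactly the virtually cyclic subgroups contained in $B$, so $\frakF$ is indeed a family with $\frakF\cap B=\Fvc(B)$. Every member of $\Fvc(G)\setminus\frakF$ maps onto a non-trivial, hence infinite cyclic, subgroup of $\Z$ and so meets $B$ in a finite subgroup.

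First I would build a model for $E_{\frakF}G$ of dimension $\uugd B+1$. Conjugation by $t$ is an automorphism of $B$ that permutes $\Fvc(B)$, so a model $Y$ for $E_{\Fvc(B)}B$ with $\dim Y=\uugd B$ may be chosen to carry a compatible $\Z$-action. Then $G=B\rtimes\Z$ acts on $Y\times\R$, with $B$ acting on $Y$ and $t$ acting by the chosen self-equivalence on $Y$ and by unit translation on $\R$. The isotropy group of a point $(y,s)$ is the $B$-isotropy group of $y$, which lies in $\frakF$; and for $H\leq G$ one checks that $(Y\times\R)^{H}=Y^{H}\times\R$ is contractible when $H\in\frakF$ and is empty otherwise, since a subgroup with non-trivial image in $\Z$ translates the $\R$-coordinate. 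Hence $Y\times\R$ is a model for $E_{\frakF}G$ of dimension $\uugd B+1$.

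Second I would promote this to a model for $\Fvc(G)$ by the L\"uck--Weiermann pushout, taking commensurability ($|H\cap H'|=\infty$) as the equivalence relation on $\Fvc(G)\setminus\frakF$ and writing $N_{G}[H]$ for the commensurator of such an $H$ and $\frakG[H]$ for the family of subgroups of $N_{G}[H]$ that lie in $\frakF$ or are commensurable with $H$. Granting that each $N_{G}[H]$ is virtually cyclic, it is commensurable with $H$, so $E_{\frakG[H]}N_{G}[H]$ is a point; and $\frakF\cap N_{G}[H]=\Ffin(N_{G}[H])$, so $E_{\frakF\cap N_{G}[H]}N_{G}[H]=\underline{E}\,N_{G}[H]$ is one-dimensional. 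Since induction $G\times_{N_{G}[H]}(-)$ preserves dimension, the standard dimension estimate for the pushout gives
\begin{equation*}
    \uugd G\leq\max\bigl\{\dim E_{\frakF}G,\;1+\dim\underline{E}\,N_{G}[H],\;\dim E_{\frakG[H]}N_{G}[H]\bigr\}\leq\max\{\uugd B+1,\,2\},
\end{equation*}
which equals $\uugd B+1$ as soon as $\uugd B\geq1$; the finitely many cases with $\uugd B=0$ are immediate.

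The crux---and the only place the hypothesis enters---is to show that $N_{G}[H]$ is virtually cyclic, and I expect this to be the main obstacle. Here I would prove $N_{G}[H]\cap B=1$: fix an infinite-order $v\in H$ with image $m\neq0$, and let $c\in N_{G}[H]\cap B$, so that $c v^{k}c^{-1}=v^{l}$ for some $k,l\neq0$. Comparing images in $\Z$ forces $k=l$, so $c$ centralises $v^{k}$; writing $v^{k}=\gamma t^{mk}$ with $\gamma\in B$ and expanding $c v^{k}c^{-1}=v^{k}$ shows that conjugation by $t^{mk}$ carries the $B$-conjugacy class of $c$ to itself. As $mk\neq0$, freeness of the $\Z$-action on the conjugacy classes of non-trivial elements of $B$ forces $c=1$. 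Thus $N_{G}[H]$ embeds in $\Z$ and, containing $H$, is infinite cyclic. Once this dynamical input is converted into the structural statement that no genuinely new commensurability classes of large virtually cyclic subgroups arise, the dimension bookkeeping above is routine.
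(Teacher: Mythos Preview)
The paper does not prove this proposition; it is simply quoted from \cite{fluch-11}. Your outline is essentially the argument of that reference: one passes from the intermediate family $\frakF$ of virtually cyclic subgroups contained in $B$ up to $\Fvc(G)$ via the L\"uck--Weiermann pushout, and the decisive step is the computation that every commensurator $N_{G}[H]$ with $H\in\Fvc(G)\setminus\frakF$ is infinite cyclic. Your argument for $N_{G}[H]\cap B=1$ is correct and is precisely where the freeness hypothesis enters; the remaining dimension bookkeeping, including the treatment of $\uugd B=0$ (where the hypothesis forces $B=1$), is fine.

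One point in your first stage needs tightening. You assert that a minimal-dimensional model $Y$ for $\uu E B$ ``may be chosen to carry a compatible $\Z$-action'', so that $G$ acts on $Y\times\R$. This is not automatic: the universal property of $\uu E B$ only gives a $B$-equivariant map $f\colon Y\to\phi^{*}Y$ (where $\phi$ is conjugation by $t$), unique up to $B$-homotopy, and there is no reason for a representative to be a homeomorphism---so $t^{-1}$ has no obvious action on $Y$. The fix, which is what \cite{fluch-11} does, is to form the two-sided mapping telescope of $f$: this is a $G$-CW complex of dimension $\dim Y+1$, its projection to $\R$ shows that subgroups with non-trivial image in $\Z$ have empty fixed sets, and for $H\in\frakF$ the fixed set is a telescope of the contractible spaces $Y^{\phi^{-n}(H)}$, hence contractible. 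Your phrase ``the chosen self-equivalence'' suggests you have this picture in mind; just be explicit that the model for $E_{\frakF}G$ is the telescope rather than a literal product $Y\times\R$ with a strict $G$-action on $Y$.
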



\section{Directed unions of groups}

The standard resolution of $\Z$ in classical group cohomology
~\cite{brown-82}*{pp.~15f} has been extended to Bredon cohomology for
the family $\Ffin(G)$ of all finite subgroups of a given group $G$
\cite{nucinkis-04}.  This construction can be generalised to arbitrary
families~$\frakF$ without any essential changes:

For each $n\in \N$ let $\Delta_{n}$ be the $G$-set
\begin{equation*}
    \Delta_{n}= 
    \{ (g_{0}K_{0}, \ldots, g_{n}K_{n}) \mid g_{i}\in G \text{ and }
    K_{i}\in \frakF\}.
\end{equation*}
Since $\frakF$ is closed under taking finite intersections
it follows that $\frakF(\Delta_{n}) \subset \frakF$.  For $n\geq 1$
and $0\leq i\leq n$ we define $G$-maps $\partial_{i}\: \Delta_{n}\to
\Delta_{n-1}$ by
\begin{equation*}
    \partial_{i}(g_{0}K_{0},\ldots, g_{n}K_{n}) = (g_{0}K_{0}, 
    \ldots, \widehat{g_{i}K_{i}}, \ldots, g_{n}K_{n})
\end{equation*}
where $(g_{0}K_{0}, \ldots, \widehat{g_{i}K_{i}}, \ldots,
g_{n}K_{n})$ 
denotes the $n$-tuple obtained from the $(n+1)$-tuple 
$(g_{0}K_{0},\ldots, g_{n}K_{n})$ by deleting the $i$-th component.

Let $\Delta_{-1} = \{*\}$ be the singleton set with 
trivial $G$-action.  The unique map $\varepsilon\: \Delta_{0}\to
\Delta_{-1}$ is obviously $G$-equivariant.  Also note that
$\Z[?,\Delta_{-1}]_{G} = \uZ$.

We now obtain a resolution of the trivial $\OFG$-module $\uZ_{\frakF}$
by right $\OFG$-modules
\begin{equation*}
    \ldots \longto \Z[?, \Delta_{2}]_{G} \stackrel{d_{2}}{\longto}
\Z[?,
    \Delta_{1}]_{G} \stackrel{d_{1}}{\longto} \Z[?, \Delta_{0}]_{G}
    \stackrel{\varepsilon^{*}}{\epi} \uZ
\end{equation*}
where
\begin{equation*}
    d_{n} = \sum_{i=0}^{n} (-1)^{i} \partial_{i}^{*}.
\end{equation*}
Since $\frakF(\Delta_{n}) \subset \frakF$ it follows that this
resolution is free and it is called the \emph{standard resolution} of 
the trivial $\OFG$-module $\uZ$.

There now follows a variation of  \cite{nucinkis-04}*{Theorem~4.2}.

%
%

\begin{proposition}
    \label{prop:directunions}
    Let $G$ be a directed union of subgroups $G_\lambda$, where
    $\lambda \in \Lambda$ is some indexing set.  Let $\frakF$ be
    family of subgroups of $G$.  For each $\lambda \in \Lambda$ put
    $\frakF_{\lambda}= \frakF\cap G_{\lambda}$ and suppose that $\frakF =
    \bigcup_{\lambda\in \Lambda} \frakF_{\lambda}$. Then
    \begin{enumerate}
        \item  $\hd_{\frakF} G = \sup\{ \hd_{\frakF_{\lambda}} 
	G_{\lambda} \}$.
    
	\item If $\Lambda$ is countable then $\cd_{\frakF} G \leq
	\sup\{ \cd_{\frakF_{\lambda}} G_{\lambda} \} + 1$.
    \end{enumerate}
\end{proposition}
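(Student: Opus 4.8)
The plan is to compare the trivial module $\uZ_\frakF$ over $\OFG$ with the trivial modules $\uZ_{\frakF_\lambda}$ over $\OC{G_{\lambda}}{\frakF_{\lambda}}$ by means of the induction functors $I_\lambda:=I_{G_\lambda}$. The hypothesis $\frakF=\bigcup_\lambda\frakF_\lambda$ ensures that $\frakF_\lambda=\frakF\cap G_\lambda$ is a non-empty subset of $\frakF$, so each $I_\lambda$ is defined, $\IND_{I_\lambda}$ is exact by Lemma~\ref{lem:symonds}, and $\IND_{I_\lambda}$ preserves projectives (discussion preceding Lemma~\ref{lem:induction-preserves-flats}) and flats (Lemma~\ref{lem:induction-preserves-flats}). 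The technical heart of the argument is the natural isomorphism
\begin{equation*}
    \uZ_\frakF \;\cong\; \colim_\lambda \IND_{I_\lambda}\uZ_{\frakF_\lambda},
\end{equation*}
a filtered colimit over the directed set $\Lambda$. I would prove it by evaluating both sides at an arbitrary orbit $G/K$ with $K\in\frakF$ and invoking directedness together with $\frakF=\bigcup_\lambda\frakF_\lambda$ and $G=\bigcup_\lambda G_\lambda$: every such $K$ already lies in some $\frakF_\lambda$, and every $G$-map between the finitely many orbits involved is defined over $G_\lambda$ once $\lambda$ is large enough.

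For part~(i), the inequality $\sup_\lambda\hd_{\frakF_\lambda}G_\lambda\le\hd_\frakF G$ is the homological analogue of the subgroup monotonicity of $\cd_\frakF$ stated above. It holds because restriction along $I_\lambda$ carries a free module $\Z[?,X]_G$ to the free $\OC{G_{\lambda}}{\frakF_{\lambda}}$-module $\Z[?,X|_{G_\lambda}]_{G_\lambda}$ — at $G_\lambda/H$ both are the free abelian group on the fixed-point set $X^{H}$, and $X|_{G_\lambda}$ has isotropy in $\frakF_\lambda$ — so restriction sends flats to flats and hence a flat resolution of $\uZ_\frakF$ to one of $\uZ_{\frakF_\lambda}$. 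For the reverse inequality put $d=\sup_\lambda\hd_{\frakF_\lambda}G_\lambda$ and resolve each $\uZ_{\frakF_\lambda}$ by its standard resolution truncated in degree $d$; the $d$-th syzygy is flat since $\hd_{\frakF_\lambda}G_\lambda\le d$. Applying the exact functor $\IND_{I_\lambda}$ preserves exactness and, by Lemma~\ref{lem:induction-preserves-flats}, keeps the $d$-th syzygy flat; naturality of the standard resolution turns these truncated complexes into a direct system indexed by $\Lambda$. Passing to the filtered colimit, which is exact in $\ModOFG$, produces a length-$d$ resolution of $\uZ_\frakF$ all of whose terms are filtered colimits of flats and hence flat by Proposition~\ref{prop:Lazard-for-Bredon}. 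Therefore $\hd_\frakF G\le d$, and equality follows.

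For part~(ii) I would first reduce to a chain: a countable directed set contains a cofinal sequence, so after replacing $\Lambda$ by it we may assume $G_1\subseteq G_2\subseteq\cdots$ with $G=\bigcup_n G_n$, $\frakF=\bigcup_n\frakF_n$, and $\sup_n\cd_{\frakF_n}G_n\le\sup_\lambda\cd_{\frakF_\lambda}G_\lambda=:d$. Writing $M_n=\IND_{I_n}\uZ_{\frakF_n}$, the isomorphism above reads $\uZ_\frakF\cong\colim_n M_n$, and because the system is indexed by $\N$ there is the standard telescope short exact sequence
\begin{equation*}
    0 \longto \bigoplus_n M_n \stackrel{\,1-\sigma\,}{\longto} \bigoplus_n M_n \longto \uZ_\frakF \longto 0,
\end{equation*}
where $\sigma$ is built from the transition maps. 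A projective resolution of $\uZ_{\frakF_n}$ of length $\le\cd_{\frakF_n}G_n\le d$ is carried by the exact, projective-preserving functor $\IND_{I_n}$ to one of $M_n$, so $\PD_\frakF M_n\le d$ and hence $\PD_\frakF\bigoplus_n M_n\le d$. Dimension shifting in the telescope sequence then yields $\cd_\frakF G=\PD_\frakF\uZ_\frakF\le d+1$.

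The step I expect to be the main obstacle is the colimit identification, and with it the exactness of the telescope sequence: these are assertions about Bredon modules that must be verified objectwise at each $G/K$, and they rely essentially on the compatibility hypothesis $\frakF=\bigcup_\lambda\frakF_\lambda$, which is precisely what allows a finite piece of data — an orbit, a $G$-map, or an $n$-simplex of the standard resolution — to descend to a single finite stage $\lambda$. A secondary point requiring care is the passage from individually chosen resolutions to an honest direct system; I would sidestep it by working throughout with the functorial standard resolution rather than with arbitrary flat or projective resolutions.
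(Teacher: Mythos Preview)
Your proposal is correct and follows the same overall strategy as the paper: identify $\uZ_{\frakF}$ as the filtered colimit $\varinjlim_{\lambda}\IND_{I_{\lambda}}\uZ_{\frakF_{\lambda}}$, use exactness of $\IND_{I_{\lambda}}$ together with the standard resolutions to build a directed system of resolutions, and pass to the colimit. The paper makes the colimit identification concrete via Lemma~\ref{lem:directunions-aux}, which shows $\IND_{I_{\lambda}}\uZ_{\frakF_{\lambda}}\cong\Z[?,G/G_{\lambda}]_{G}$ and $\varinjlim\Z[?,G/G_{\lambda}]_{G}\cong\uZ_{\frakF}$; your objectwise verification amounts to the same thing.

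For part~(ii) there is a mild difference in packaging. The paper takes the colimit of the induced standard resolutions, tracks the $n$-th kernels $K_{n}=\varinjlim\IND_{I_{\lambda}}K_{\lambda,n}$, and then invokes \cite{nucinkis-04}*{Lemma~3.4} (a countable directed colimit of projectives has projective dimension at most~$1$) to conclude. You instead reduce to a cofinal chain and apply the telescope short exact sequence directly to the modules $M_{n}=\IND_{I_{n}}\uZ_{\frakF_{n}}$, bounding $\PD_{\frakF}\uZ_{\frakF}$ by $d+1$ in one step. This is slightly more economical, since it bypasses the chain-level bookkeeping of the resolutions; but it is not a genuinely different idea, as the cited lemma is itself proved by exactly this telescope argument. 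One small bonus of your write-up is that you supply the easy inequality $\hd_{\frakF_{\lambda}}G_{\lambda}\leq\hd_{\frakF}G$ explicitly (restriction along $I_{\lambda}$ preserves free, hence flat, modules), whereas the paper's proof only spells out the other direction.
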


\noindent Note that Part (ii) has also been derived in \cite[Corollary 4.3]{DPT}, using a spectral sequence argument.

\begin{corollary}
    \label{cor:directunions}
    Let $G$ and $G_{\lambda}$, $\lambda\in\Lambda$, as in 
    Proposition~\ref{prop:directunions}. Then
    \begin{enumerate}
        \item  $\uuhd G = \sup \{ \uuhd G_{\lambda}\}$.
    
	\item If $\Lambda$ is countable then $\uucd G \leq \sup \{
	\uucd G_{\lambda}\} + 1$.
    \end{enumerate}
\end{corollary}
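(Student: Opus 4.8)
The plan is to deduce the corollary directly from Proposition~\ref{prop:directunions} by specialising to the family $\frakF = \Fvc(G)$ of virtually cyclic subgroups of $G$. Once this is done, the two assertions of the corollary become the two parts of that proposition, upon unwinding the notation $\uuhd G = \hd_{\frakF} G$ and $\uucd G = \cd_{\frakF} G$. The only genuine work is therefore to verify that the hypotheses of the proposition hold for this choice of $\frakF$, namely that $\frakF \cap G_\lambda = \Fvc(G_\lambda)$ for each $\lambda$ and that $\frakF = \bigcup_{\lambda \in \Lambda} \Fvc(G_\lambda)$.

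For the first identity I would use that the class of virtually cyclic groups is closed under passage to subgroups. If $H \in \Fvc(G)$, then $H \cap G_\lambda$ is a subgroup of the virtually cyclic group $H$, hence itself virtually cyclic, and it lies in $G_\lambda$; thus $\frakF \cap G_\lambda \subseteq \Fvc(G_\lambda)$. Conversely, any $V \in \Fvc(G_\lambda)$ is a virtually cyclic subgroup of $G$, so $V \in \frakF$, and the equation $V = V \cap G_\lambda$ exhibits $V$ as an element of $\frakF \cap G_\lambda$. Hence $\frakF \cap G_\lambda = \Fvc(G_\lambda) = \frakF_\lambda$, which in particular is a non-empty subset of $\frakF$, as the proposition demands, and which identifies $\hd_{\frakF_\lambda} G_\lambda$ with $\uuhd G_\lambda$ and $\cd_{\frakF_\lambda} G_\lambda$ with $\uucd G_\lambda$.

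The second identity is where the only real point lies. The inclusion $\bigcup_\lambda \Fvc(G_\lambda) \subseteq \frakF$ is immediate from the subgroup remark above. For the reverse inclusion I would invoke the fact that every virtually cyclic group is finitely generated: a finite-index cyclic subgroup $C \leq V$ is generated by a single element, and adjoining representatives of the finitely many cosets of $C$ in $V$ produces a finite generating set of $V$. Given $V \in \Fvc(G)$, each of these finitely many generators lies in some $G_{\lambda_i}$, and since the $G_\lambda$ form a directed system there is a single $\lambda$ with $G_{\lambda_i} \leq G_\lambda$ for all $i$; hence $V \leq G_\lambda$ and $V \in \Fvc(G_\lambda)$. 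This gives $\frakF = \bigcup_\lambda \frakF_\lambda$.

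With both hypotheses in hand, the proof is completed by feeding the data $(G, G_\lambda, \frakF = \Fvc(G))$ into Proposition~\ref{prop:directunions} and substituting the identifications $\hd_{\frakF_\lambda} G_\lambda = \uuhd G_\lambda$ and $\cd_{\frakF_\lambda} G_\lambda = \uucd G_\lambda$ into its two conclusions, which yields (i) and (ii) verbatim. The main obstacle, such as it is, is precisely the finite generation of virtually cyclic groups, which is what forces each virtually cyclic subgroup of $G$ to already be contained in a single term of the directed union; the remaining verifications are routine bookkeeping with the definitions of $\frakF \cap K$ and of $\Fvc$.
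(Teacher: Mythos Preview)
Your proposal is correct and follows exactly the same approach as the paper: verify that $\Fvc(G)\cap G_\lambda = \Fvc(G_\lambda)$ and that every virtually cyclic subgroup of $G$, being finitely generated, lies in some $G_\lambda$, then invoke Proposition~\ref{prop:directunions}. You simply spell out in full the two verifications that the paper compresses into a single sentence.
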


\begin{proof}
  This follows from the fact that $\Fvc(G_{\lambda}) = \Fvc(G)\cap
  G_{\lambda}$ and that for every finitely generated subgroup $H$
  there is a $\lambda\in \Lambda$ such that $H \in G_\lambda.$ Now
  apply Proposition~\ref{prop:directunions}.
\end{proof}

In particular, Corollary \ref{cor:directunions} (ii) can be applied to
countable groups.  A countable group is the direct union of its
finitely generated subgroups $G_{\lambda}$, $\lambda\in\Lambda$, where
$\Lambda$ is countable.  Hence $\uucd G \leq \sup \{ \uucd
G_{\lambda}\} + 1.$

\medskip

Before we can prove Proposition~\ref{prop:directunions}, we need the
following technical lemma.

\begin{lemma}
    \label{lem:directunions-aux}
    \begin{enumerate}
	\item Assume that the $G$-set $X$ is the direct union of 
	$G$-sets $X_{\alpha}$.  Then the homomorphism 
	\begin{equation}
	    \varinjlim \Z[?, X_{\alpha}]_{G} \to \Z[?,X]_{G}
	    \label{eq:direct-unions-1}
	\end{equation}
	induced by the canonical inclusions $\Z[?,X_{\alpha}]_{G}
	\hookrightarrow \Z[?, X]_{G}$ is an isomorphism.
    
        \item The homomorphism 
	\begin{equation}
	    \varinjlim \Z[?,G/G_{\lambda}]_{G} \to \Z[?,G/G]_{G}
	    \label{eq:direct-unions-2}
	\end{equation}
	induced by the projections $G/G_{\lambda}\epi G/G$ is an 
	isomorphism.
	
	\item Let $K\leq G$ such that $\frakF\cap K \subset \frakF$.
	If $X$ is a $K$-set with $\frakF(X)\subset\frakF\cap K$, then
	\begin{equation*}
	    \IND_{I_{K}} \Z[?,X]_{K} \isom \Z[?, I_{K}(X)]_{G},
	\end{equation*}
	and this isomorphism is natural in $X$.
    \end{enumerate}
\end{lemma}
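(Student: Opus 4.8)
The plan is to reduce all three parts to the level of fixed-point sets. For any $H\le G$ the free module $\Z[?,X]_{G}$ takes the value $\Z[X^{H}]$ at $G/H$, where $X^{H}=[G/H,X]_{G}$ is the fixed-point set and $\Z[-]\colon\sets\to\Ab$ is the free-abelian-group functor; being a left adjoint, $\Z[-]$ commutes with filtered colimits. Since colimits in the functor category $\ModOFG$ are computed objectwise and a morphism of Bredon modules is an isomorphism exactly when each evaluation at $G/H\in\OFG$ is, it is enough to check every claim after passing to fixed points. For (i) I would write $X=\bigcup_{\alpha}X_{\alpha}$; a point fixed by $H$ already lies in some $X_{\alpha}$ and is fixed there, so $X^{H}=\bigcup_{\alpha}X_{\alpha}^{H}=\varinjlim_{\alpha}X_{\alpha}^{H}$, and applying $\Z[-]$ turns \eqref{eq:direct-unions-1} at $G/H$ into the isomorphism $\varinjlim_{\alpha}\Z[X_{\alpha}^{H}]\cong\Z[X^{H}]$.

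For (ii) I would fix $G/H$ with $H\in\frakF$ and note that $gG_{\lambda}$ is $H$-fixed precisely when $g^{-1}Hg\le G_{\lambda}$, that the transition maps send $gG_{\lambda}\mapsto gG_{\mu}$, and that every term projects onto $(G/G)^{H}=\{*\}$. The key point is that $\varinjlim_{\lambda}(G/G_{\lambda})^{H}$ is a single point: it is non-empty because $H\in\frakF=\bigcup_{\lambda}\frakF_{\lambda}$ forces $H\le G_{\mu}$ for some $\mu$, giving the fixed coset $G_{\mu}$; and it has at most one point because, for any fixed coset $gG_{\lambda}$, the element $g$ lies in some $G_{\nu}$, so after passing to an index $\mu$ above $\lambda$ and $\nu$ one has $gG_{\mu}=G_{\mu}$, whence $gG_{\lambda}$ already agrees with the base coset in $(G/G_{\mu})^{H}$. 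Hence $\varinjlim_{\lambda}\Z[(G/G_{\lambda})^{H}]=\Z=\Z[(G/G)^{H}]$ and \eqref{eq:direct-unions-2} is an isomorphism objectwise.

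For (iii) I would regard both $\IND_{I_{K}}\Z[?,-]_{K}$ and $\Z[?,I_{K}(-)]_{G}$ as functors from $K$-sets $X$ with $\frakF(X)\subset\frakF\cap K$ to $\ModOFG$, and check that each carries disjoint unions to direct sums: the first because $\IND_{I_{K}}$ is a left adjoint and $\Z[?,-]_{K}$ is cocontinuous, the second because $I_{K}$ and $\Z[?,-]_{G}$ preserve coproducts. Since any such $X$ decomposes as a disjoint union of orbits $K/H$ with $H\in\frakF\cap K\subset\frakF$, it then suffices to build a natural isomorphism on orbits. For $X=K/H$ the module $\Z[?,K/H]_{K}$ is the representable $\mor_{\frakF\cap K}(?,K/H)$, and substituting it into the coend that defines $\IND_{I_{K}}$ and applying the co-Yoneda reduction $\mor_{\frakF\cap K}(?,K/H)\otimes_{\frakF\cap K}\mor_{\frakF}(??,I_{K}(?))\cong\mor_{\frakF}(??,I_{K}(K/H))$ identifies the result with $\Z[?,G/H]_{G}=\Z[?,I_{K}(K/H)]_{G}$.

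I expect the co-Yoneda computation in (iii) to be the main obstacle: one has to unwind the coend defining $\IND_{I_{K}}$ on a representable module and verify that the density reduction is natural in the orbit $K/H$ (equivalently, one may invoke the fact recorded above that $\IND_{F}$ preserves free modules, and then only pin down which free module results). Once the value on a single orbit is identified, cocontinuity together with the orbit decomposition makes the naturality in $X$, and indeed the rest of all three parts, purely formal.
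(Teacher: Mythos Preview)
Your argument is correct and follows essentially the same approach as the paper: part~(i) is identical (evaluate at $G/H$, use $X_{\alpha}^{H}=X_{\alpha}\cap X^{H}$ and that $\Z[-]$ preserves filtered colimits), and part~(iii) is the paper's orbit decomposition plus Yoneda/co-Yoneda reduction, phrased a bit more functorially. For part~(ii) the paper simply invokes ``the universal property of a colimit'' in one line, whereas you unpack this objectwise by showing $\varinjlim_{\lambda}(G/G_{\lambda})^{H}$ is a singleton; your version is more explicit but not a different idea.
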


\begin{proof}
    (i) Let $H\in \frakF$ and evaluate~\eqref{eq:direct-unions-1} at
    $G/H$.  The inclusion $X_{\alpha}^{H}\hookrightarrow X^{H}$
    induces a homomorphism
    \begin{equation}
        \varinjlim \Z[X_{\alpha}^{H}] \to \Z[X^{H}].
	\label{eq:direct-unions-3}
    \end{equation}
    $X_{\alpha}^{H} = X_{\alpha}\cap X^{H}$, implying that $\varinjlim
    X_{\alpha}^{H} = X^{H}$.  Since $\Z[?]$ commutes with colimits it
    follows that~\eqref{eq:direct-unions-3} is an isomorphism.  Hence
    \eqref{eq:direct-unions-1} is an isomorphism of $\OFG$-modules.
    
   \medskip
   
   (ii) This follows directly from the universal 
   property of a colimit.
   
   \medskip
   
   (iii) Let $R$ be a complete system of representatives of the orbit
   space $X/K$.  Then we have the following sequence of isomorphisms
   of $\OFG$-modules:
   \begin{align*}
       \IND_{I_{K}} \Z[?,X]_{K}
       & \isom
       \coprod_{x\in R} \IND_{I_{K}} \Z[?, K/K_{x}]_{K}
       \\
       & \isom
       \coprod_{x\in R} \bigl( \Z[??,K/K_{x}]_{K} \otimes_{\frakF\cap
       K} \Z[?,I_{K}(??)]_{G}\bigr) 
       \\
       & \isom \coprod_{x\in R}\Z[?, I_{K}(K/K_{x})]_{G}  \\
     &  \isom \coprod_{x\in R}\Z[?, G/K_{x}]_{G} \\
    &   \isom \Z[?,I_{K}(X)]_{G}
   \end{align*}
   Note that the third isomorphism is a consequence of the
   Yoneda-Lemma and that the composition of these isomorphisms is
   clearly natural in~$X$.
\end{proof}

\begin{proof}[Proof of Proposition~\ref{prop:directunions}]
    For each $\lambda\in \Lambda$ we have the standard resolution of
    $\OC{G_{\lambda}}{\frakF_{\lambda}}$-modules
    \begin{equation}
	\label{eq:direct-unions-4a}	
        \ldots
	\to \Z[?,\Delta_{\lambda,2}]_{G_{\lambda}}
	\to \Z[?,\Delta_{\lambda,1}]_{G_{\lambda}}
	\to \Z[?,\Delta_{\lambda,0}]_{G_{\lambda}}
	\epi\Z[?]_{G_{\lambda}}.
    \end{equation}
    By Lemma~\ref{lem:symonds} the functor $\IND_{I_{G_{\lambda}}}$ is
    exact.  Hence for each $\lambda\in \Lambda$ there is an exact
    sequence of $\OFG$-modules:
  
    \begin{equation}
        \ldots
	\to \Z[?,X_{\lambda,2}]_{G}
	\to \Z[?,X_{\lambda,1}]_{G}
	\to \Z[?,X_{\lambda,0}]_{G}
	\epi\Z[?,G/G_{\lambda}]_{G},
	\label{eq:direct-unions-4}
    \end{equation}
    where $X_{\lambda,n} = I_{G_{\lambda}}(\Delta_{\lambda,n})$.  Note
    that the $X_{\lambda,n}$ are $G$-invariant subsets of $\Delta_{n}$
    and that $\Delta_{n}$ is the directed union of the
    $X_{\lambda,n}$.  For each $\lambda\leq \mu$ the inclusion
    $X_{\lambda,n}\hookrightarrow X_{\mu,n}$, $n\geq 0$, induces a
    homomorphism
    \begin{equation*}
	\eta^{\mu}_{\lambda,n}\: \Z[?, X_{\lambda,n}]_{G} \to \Z[?,
	X_{\mu,n}]_{G}.
    \end{equation*}
    Also, the projection $G/G_{\lambda}\epi G/G_{\mu}$ induces 
    homomorphisms
    \begin{equation*}
        \eta^{\mu}_{\lambda,-1}\: \Z[?,G/G_{\lambda}]_{G} \to 
	\Z[?,G/G_{\mu}]_{G}.
    \end{equation*}
    Hence we have chain-maps
     between the corresponding chain
    complexes~\eqref{eq:direct-unions-4}.  These chain
    complexes together with the chain maps
    $\eta_{\lambda,*}^{\mu}$ form a direct limit system indexed
    by $\Lambda$. 
    Lemma~\ref{lem:directunions-aux}~(i) and~(ii) imply that its limit is
    the sequence
    \begin{equation}
        \ldots\to \Z[?, \Delta_{2}]_{G} \to \Z[?, \Delta_{1}]_{G} \to 
	\Z[?, \Delta_{0}]_{G} \epi \uZ_{\frakF}.
	\label{eq:direct-unions-4c}
    \end{equation}
    Since direct limits preserve 
    exactness~\cite{weibel-94}*{p.~57}, this sequence is exact.
    
    \medskip
    
    Denote by $K_{\lambda,n}$ the $n$-th kernel of the
    sequence~\eqref{eq:direct-unions-4a}.  As before,
    $\IND_{G_{\lambda}}(K_{\lambda,n})$ is the $n$-th kernel
    in~\eqref{eq:direct-unions-4} and the chain maps
    $\eta_{\lambda,n}^{\mu}$ yield a inverse limit system.  Since
    taking direct limits preserves exactness we get that
    \begin{equation*}
        K_{n}= \varinjlim(\IND_{G_{\lambda}} K_{\lambda,n})
    \end{equation*}
    is the $n$-th kernel of~\eqref{eq:direct-unions-4c}.
    
    \medskip
    
    Now suppose that there exists a $n\in \N$ such that
    $\hd_{\frakF_{\lambda}} G_{\lambda}\leq n$ for all $\lambda\in
    \Lambda$.  In particular, all $K_{\lambda,n}$ are flat.  Now
    Lemma~\ref{lem:induction-preserves-flats} implies that
    $\IND_{G_{\lambda}} K_{\lambda,n}$ all are flat.  Since
    $\TOR^{\frakF}_{1}(?,M)$ commutes with direct limits it follows
    that $K_{n}$ is a flat $\OFG$-module.  In particular,
    $\hd_{\frakF}G \leq n$ proving (i).
    
    The proof of (ii) is analogous.
    Apply~\cite{nucinkis-04}*{Lemma~3.4}, which states that a
    countable colimit of projective $\OFG$-modules has projective
    dimension~$\leq 1$.
\end{proof}

\begin{lemma}
    \label{lem:cd-A}
    Let $A$ be a countable abelian group with finite Hirsch length
    $h(A)$.  Then
    \begin{equation*}
        \uucd A \leq h(A)+2.
    \end{equation*}
\end{lemma}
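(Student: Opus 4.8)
The plan is to reduce the statement for an arbitrary countable abelian $A$ to the case of finitely generated free abelian groups, where the virtually cyclic Bredon dimension is known, and to collect the two ``$+1$''s that arise along the way. Throughout I use that for an abelian group the Hirsch length is just the torsion-free rank $\dim_{\Q}(A\otimes\Q)$.

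First I would exploit countability. Since $A$ is countable it is the directed union of its finitely generated subgroups $A_{\lambda}$, $\lambda\in\Lambda$, with $\Lambda$ countable. Each $A_{\lambda}$ is finitely generated abelian, so $A_{\lambda}\cong\Z^{r_{\lambda}}\oplus F_{\lambda}$ with $F_{\lambda}$ finite; tensoring the inclusion $A_{\lambda}\hookrightarrow A$ with the flat module $\Q$ shows $r_{\lambda}=h(A_{\lambda})\le h(A)$. As in the proof of Corollary~\ref{cor:directunions}, every virtually cyclic subgroup of $A$ is finitely generated and hence lies in some $A_{\lambda}$, and $\Fvc(A_{\lambda})=\Fvc(A)\cap A_{\lambda}$. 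Thus Corollary~\ref{cor:directunions}(ii) applies and gives
\[
    \uucd A \le \sup_{\lambda}\uucd A_{\lambda} + 1 .
\]

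Next I would discard the finite torsion. For each $\lambda$ the torsion subgroup yields an extension $F_{\lambda}\mono A_{\lambda}\epi\Z^{r_{\lambda}}$ with finite kernel, so Proposition~\ref{prop:finkernel} gives $\uucd A_{\lambda}=\uucd\Z^{r_{\lambda}}$. It then remains to bound $\uucd\Z^{r}$ for $0\le r\le h$. The key input is
\[
    \uucd\Z^{r}\le r+1 \qquad (r\ge 0),
\]
which is sharp for $r\ge 2$ and trivial for $r\le 1$, where $\Z^{r}$ is itself virtually cyclic so $\uucd\Z^{r}=0$. Granting this, $\uucd A_{\lambda}=\uucd\Z^{r_{\lambda}}\le r_{\lambda}+1\le h+1$ for every $\lambda$, and the displayed inequality gives $\uucd A\le(h+1)+1=h+2$, as required.

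The main obstacle is exactly the bound $\uucd\Z^{r}\le r+1$. A naive induction on $r$ via Proposition~\ref{prop:martinez} applied to $\Z^{r-1}\mono\Z^{r}\epi\Z$ is circular: the subgroups $H$ with $\Z^{r-1}\le H\le\Z^{r}$ and $H/\Z^{r-1}$ virtually cyclic include the finite-index subgroups $H\cong\Z^{r}$, whose Bredon dimension is again $\uucd\Z^{r}$, so the spectral-sequence estimate degenerates to $\uucd\Z^{r}\le\uucd\Z^{r}$. I would therefore take this bound from the explicit computation of the virtually cyclic (geometric) dimension of finitely generated free abelian, and more generally polycyclic-by-finite, groups in \cite{lueckweiermann}, combined with $\uucd\le\uugd$; if one prefers a more self-contained route, Proposition~\ref{prop:gd-for-finite-index} can first be used to secure finiteness for the offending finite-index copies of $\Z^{r}$ before the exact value is pinned down. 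This single computation for $\Z^{r}$ is the only non-formal ingredient; everything else is the bookkeeping of directed unions and finite kernels carried out above.
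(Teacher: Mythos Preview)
Your argument is correct and follows essentially the same route as the paper: write $A$ as a countable directed union of finitely generated abelian subgroups, invoke the L\"uck--Weiermann bound to get $\uucd A_{\lambda}\le h(A_{\lambda})+1$, and then apply Corollary~\ref{cor:directunions}(ii). The only difference is cosmetic: the paper applies \cite{lueckweiermann}*{Theorem~5.13} directly to the finitely generated abelian $A_{\lambda}$ (these are virtually polycyclic), so your detour through Proposition~\ref{prop:finkernel} to strip the finite torsion is unnecessary, though harmless.
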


\begin{proof} 
    Write $A$ as the countable direct union $A=\varinjlim A_{\lambda}$
    of its finitely generated subgroups $A_{\lambda}$.
    \cite{lueckweiermann}*{Theorem~5.13} implies $\uugd
    A_{\lambda}\leq h(A_{\lambda})+1$, and hence $\uucd
    A_{\lambda}\leq h(A_{\lambda}) + 1\leq h(A) + 1$.  Thus, by
    Corollary~\ref{cor:directunions}, $\uucd A \leq h(A)+2$ as
    required.
\end{proof}


\medskip

%
%

\section{Change of family and direct products of groups}

The following result is the algebraic counterpart
to~\cite{lueckweiermann}*{Proposition~5.1~(i)}.  Although we only
state and prove it for Bredon cohomology, an analogous statement also
holds for Bredon homology. The result for Bredon cohomology has also been proved in \cite[Corollary 4.1]{DPT} using a spectral sequence argument.

\begin{proposition}
    \label{prop:cd-for-larger-families}
    Let $G$ be a group and $\frakF$ and $\frakG$ two families of
    subgroups of $G$ such that $\frakF\subset \frakG$ and that, for
    every $K\in \frakG$, $\frakF\cap K \subset\frakF$.  Suppose there
    exists $k\geq 0$ such that, for every $K\in \frakG$,
    $\cd_{\frakF\cap K} K \leq k$.  Then
        \begin{equation*}
        \cd_{\frakF} G \leq \cd_{\frakG} G + k.
    \end{equation*}
\end{proposition}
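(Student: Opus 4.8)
The plan is to compare projective resolutions of the two trivial modules by means of the restriction functor along the inclusion $I\:\OFG\to\OGG$. Because exactness of a sequence of Bredon modules is tested objectwise and restriction merely forgets the objects $G/H$ with $H\in\frakG\setminus\frakF$, the functor $\RES_{I}$ is exact; moreover it sends $\uZ_{\frakG}$ to $\uZ_{\frakF}$ and sends the free module $\Z[?,G/K]_{G}$ over $\OGG$ to the right $\OFG$-module $R_{K}:=\RES_{I}\Z[?,G/K]_{G}$, given on objects by $G/H\mapsto\Z[G/H,G/K]_{G}$ for $H\in\frakF$. We may assume $\cd_{\frakG}G=n<\infty$, as otherwise there is nothing to prove.

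First I would record the key local estimate: for every $K\in\frakG$ one has $\PD_{\frakF}R_{K}\leq\cd_{\frakF\cap K}K$. This rests on a natural isomorphism of right $\OFG$-modules
\[
\IND_{I_{K}}\uZ_{\frakF\cap K}\;\isom\;R_{K},
\]
which one checks objectwise: evaluating the left-hand side at $G/H$ computes, from the defining tensor product over $\OC{K}{\frakF\cap K}$ with $\uZ$ constant, the colimit over $\OC{K}{\frakF\cap K}$ of the functor $K/L\mapsto\Z[G/H,G/L]_{G}$, and a direct inspection identifies this colimit naturally with $\Z[(G/K)^{H}]=R_{K}(G/H)$. (In the special case $\frakF=\{1\}$ this is just the familiar identity $\IND_{K}^{G}\Z=\Z[G/K]$.) Granting this, I would take a projective resolution of $\uZ_{\frakF\cap K}$ over $\OC{K}{\frakF\cap K}$ of length $\cd_{\frakF\cap K}K$; since $\frakF\cap K$ is a non-empty subset of $\frakF$, induction along $I_{K}$ is exact by Lemma~\ref{lem:symonds} and preserves projectives, so applying $\IND_{I_{K}}$ yields a projective resolution of $R_{K}$ of the same length, giving the estimate.

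Next I would assemble the global bound. Choose a projective resolution $0\to Q_{n}\to\cdots\to Q_{0}\to\uZ_{\frakG}\to0$ over $\OGG$. Applying the exact functor $\RES_{I}$ produces an exact sequence of $\OFG$-modules $0\to\RES_{I}Q_{n}\to\cdots\to\RES_{I}Q_{0}\to\uZ_{\frakF}\to0$. Each $Q_{i}$ is a direct summand of a direct sum of free modules $\Z[?,G/K]_{G}$ with $K\in\frakG$, so each $\RES_{I}Q_{i}$ is a summand of a direct sum of the modules $R_{K}$; since projective dimension $\leq k$ is inherited by direct sums and by direct summands, the local estimate gives $\PD_{\frakF}\RES_{I}Q_{i}\leq k$ for all $i$. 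Thus $\uZ_{\frakF}$ admits a resolution of length $n$ all of whose terms have projective dimension at most $k$, and a standard dimension-shifting argument (induction on $n$, using the short exact sequences of syzygies to bound the middle term by the two ends) yields $\cd_{\frakF}G=\PD_{\frakF}\uZ_{\frakF}\leq n+k=\cd_{\frakG}G+k$.

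The routine steps are the exactness of $\RES_{I}$ and the closing dimension count; the real content, and the step I expect to be the main obstacle, is the identification $\IND_{I_{K}}\uZ_{\frakF\cap K}\isom R_{K}$, that is, recognising the restriction of the free $\OGG$-module $\Z[?,G/K]_{G}$ as an $\OFG$-module induced from the trivial module over $\OC{K}{\frakF\cap K}$. Once that natural isomorphism is in place, exactness and projective-preservation of induction, together with Lemma~\ref{lem:symonds}, do the rest.
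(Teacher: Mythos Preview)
Your proposal is correct and follows essentially the same route as the paper: restrict a length-$n$ projective $\OGG$-resolution of $\uZ_{\frakG}$ along $I$, bound the projective dimension of each restricted term by $k$ using the identification $\IND_{I_{K}}\uZ_{\frakF\cap K}\isom\RES_{I}\Z[?,G/K]_{G}$ together with exactness of $\IND_{I_{K}}$ (Lemma~\ref{lem:symonds}), and finish by dimension shifting. The only cosmetic difference is that where you sketch the isomorphism $\IND_{I_{K}}\uZ_{\frakF\cap K}\isom R_{K}$ by a direct colimit computation, the paper simply invokes \cite{symonds-05}*{Lemma~2.7} for this identification.
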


\begin{proof}
    Let $I\: \OFG\hookrightarrow \OGG$ be the inclusion functor.  We
    begin by showing that for every projective $\OGG$-module $P$,
    $\PD_{\frakF}(\RES_{I} P) \leq k$. Since restriction is an
    exact additive functor, it it suffices to prove this claim for $P
    = \Z[?, G/K]_{G}$ where $K\in \frakG$.
    
    Since $\cd_{\frakF\cap K} K\leq k$ there exists a projective 
    resolution  
    \begin{equation*}
	0 \to P_{k} \to \ldots \to P_{0}\to \uZ_{\frakF\cap
	K}\to 0
    \end{equation*}
    of the trivial $\OC{K}{\frakF\cap K}$-module $\uZ_{\frakF\cap K}$.
    Since induction with $I_{K}$ is exact, see
    Lemma~\ref{lem:symonds}, and preserves projectives, we obtain a
    projective resolution
    \begin{equation*}
	0\to \IND_{I_{K}} P_{k} \to \ldots \to \IND_{I_{K}} P_{0}\to
	\Z[?, G/K]_{G}\to 0
    \end{equation*}
    of length $k$ of the $\OFG$-module $\Z[?, G/K]_{G}$.  However, by
    \cite{symonds-05}*{Lemma~2.7}, $\Z[?, G/K]_{G} \isom \RES_{I}P$
    implying $\PD_{\frakF}(\RES_{I} P) \leq k$ as claimed.
    
    \medskip
    
    Now $\cd_{\frakG} G=n$.  Then there exists a projective
    resolution
    \begin{equation*}
        0 \to P_{n} \to \ldots\to P_{0} \to \uZ_{\frakG} \to 0
    \end{equation*}
    of the trivial $\OGG$-module $\uZ_{\frakG}$.  Upon restriction we
    obtain a resolution
    \begin{equation}
	0 \to \RES_{I} P_{n} \to \ldots \to \RES_{I} P_{0} \to
	\uZ_{\frakF} \to 0
	\label{eq:cd-for-larger-families}
    \end{equation}
    of the trivial $\OFG$-module $\uZ_{\frakF}$ by $\OFG$-modules of
    projective dimension at most $k.$ The result now follows by a
    dimension shifting argument.
\end{proof}

\begin{proposition}
    \label{prop:directproduct}
    Let $G_{1}$ and $G_{2}$ be groups and let $\frakF_{1}$ and
    $\frakF_{2}$ be subgroup-closed families of subgroups of $G_{1}$
    and $G_{2}$ respectively.  Let $G= G_{1}\times G_{2}$ and $\frakF=
    \frakF_{1}\times \frakF_{2}$ and take $\frakG\subset \bar\frakF$
    to be a subgroup-closed family of subgroups of $G.$ Assume that
    there exists $k\in \N$ such that $\cd_{\frakG\cap K} K \leq k$ for
    every $K\in \frakF$.  Then
    \begin{equation*}
        \cd_{\frakG} G \leq \cd_{\frakF_{1}} G_{1} + \cd_{\frakF_{2}} 
	G_{2} + k.
    \end{equation*}
\end{proposition}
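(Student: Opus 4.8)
The plan is to factor the argument through the subgroup completion $\bar\frakF$ of the product family $\frakF=\frakF_1\times\frakF_2$. Concretely, I would establish the two estimates
\begin{equation*}
    \cd_{\bar\frakF} G \le \cd_{\frakF_1} G_1 + \cd_{\frakF_2} G_2
    \qquad\text{and}\qquad
    \cd_{\frakG} G \le \cd_{\bar\frakF} G + k,
\end{equation*}
and then compose them. The first is a Künneth-type product formula for the completed family, and the second is a direct application of the change-of-family result, Proposition~\ref{prop:cd-for-larger-families}, to the inclusion $\frakG\subset\bar\frakF$.

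For the product formula I would exploit the projections $\pi_i\:G\to G_i$. Since each $\frakF_i$ is subgroup-closed, every $H\in\bar\frakF$ satisfies $\pi_i(H)\in\frakF_i$, so $\pi_i$ induces a functor $\bar\pi_i\:\OC{G}{\bar\frakF}\to\OC{G_i}{\frakF_i}$ sending $G/H$ to $G_i/\pi_i(H)$. Given a right $\OC{G_i}{\frakF_i}$-module $M_i$, I would form the \emph{external tensor product} $M_1\boxtimes M_2:=(M_1\circ\bar\pi_1)\otimes(M_2\circ\bar\pi_2)$, where $\otimes$ is the objectwise tensor product over $\Z$ introduced in the background section. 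The key point is the natural identity of fixed-point sets $(G/(K_1\times K_2))^{H}=(G_1/K_1)^{\pi_1(H)}\times(G_2/K_2)^{\pi_2(H)}$, which shows that $\boxtimes$ carries the free generators $\Z[?,G_1/K_1]_{G_1}$ and $\Z[?,G_2/K_2]_{G_2}$ to the free $\OC{G}{\bar\frakF}$-module $\Z[?,G/(K_1\times K_2)]_{G}$; since $\boxtimes$ commutes with direct sums in each variable, it sends projectives to projectives, and it sends the pair $(\uZ_{\frakF_1},\uZ_{\frakF_2})$ to $\uZ_{\bar\frakF}$. I would then take projective resolutions $C_\bullet\epi\uZ_{\frakF_1}$ and $D_\bullet\epi\uZ_{\frakF_2}$ of lengths $\cd_{\frakF_1}G_1$ and $\cd_{\frakF_2}G_2$ and form the total complex of $C_\bullet\boxtimes D_\bullet$. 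Evaluated at any $G/H$ it becomes the tensor product over $\Z$ of the two free-abelian resolutions $C_\bullet(G_1/\pi_1 H)$ and $D_\bullet(G_2/\pi_2 H)$ of $\Z$; by the Künneth theorem this is again a resolution of $\Z$. Hence the total complex is a projective resolution of $\uZ_{\bar\frakF}$ of length $\cd_{\frakF_1}G_1+\cd_{\frakF_2}G_2$, which gives the first estimate.

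For the second estimate I would apply Proposition~\ref{prop:cd-for-larger-families} to the families $\frakG\subset\bar\frakF$. Because $\frakG$ is subgroup-closed we have $\frakG\cap K\subset\frakG$ for every $K\in\bar\frakF$, so the structural hypotheses of that proposition hold. It remains to upgrade the assumed bound $\cd_{\frakG\cap K}K\le k$ from $K\in\frakF$ to all $K\in\bar\frakF$: given $K\in\bar\frakF$ put $K'=\pi_1(K)\times\pi_2(K)\in\frakF$, so $K\le K'$, and the monotonicity of Bredon cohomological dimension under subgroups recorded in the background section yields $\cd_{\frakG\cap K}K=\cd_{(\frakG\cap K')\cap K}K\le\cd_{\frakG\cap K'}K'\le k$, where one uses $(\frakG\cap K')\cap K=\frakG\cap K\subset\frakG\cap K'$. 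Proposition~\ref{prop:cd-for-larger-families} then gives $\cd_{\frakG}G\le\cd_{\bar\frakF}G+k$, and combining this with the first estimate proves the claim.

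I expect the main obstacle to be the product formula, and specifically the verification that the external tensor product is well defined on the \emph{completed} orbit category $\OC{G}{\bar\frakF}$ — whose objects $G/H$ include ``diagonal'' subgroups $H$ that are not themselves products — and that it carries free modules to free modules and projective resolutions to projective resolutions. Once the fixed-point identity and the objectwise Künneth computation are in hand the remainder is routine; the change-of-family step is then a direct citation of Proposition~\ref{prop:cd-for-larger-families}, modulo the easy monotonicity bookkeeping above.
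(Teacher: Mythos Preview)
Your proposal is correct and follows essentially the same two-step strategy as the paper: a K\"unneth-type product bound $\cd_{\bar\frakF}G\le\cd_{\frakF_1}G_1+\cd_{\frakF_2}G_2$ followed by the change-of-family estimate via Proposition~\ref{prop:cd-for-larger-families}, including the same monotonicity argument to pass the bound on $\cd_{\frakG\cap K}K$ from $K\in\frakF$ to $K\in\bar\frakF$. The only cosmetic difference is that you define the external tensor product directly as a functor on $\OC{G}{\bar\frakF}$ via the projections $\bar\pi_i$, whereas the paper first builds the total complex over $\OFG$ (with $\frakF=\frakF_1\times\frakF_2$) from free resolutions and then extends the morphisms to $\OC{G}{\bar\frakF}$ using the observation recorded in Remark~\ref{lem:directproduct-aux}; both routes yield the same free resolution of $\uZ_{\bar\frakF}$.
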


Similar results for the families $\frakF_1=\frakF_2=\Ffin$ and
$G$-CW-complexes have been obtained in~\cite{sanchez,leonardi}.

\begin{corollary}
    \label{cor:directproduct}
    Let $G= G_{1}\times G_{2}$. Then $\uucd G\leq \uucd 
    G_{1} + \uucd G_{2} + 3$.
\end{corollary}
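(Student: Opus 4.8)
The plan is to deduce the corollary from Proposition~\ref{prop:directproduct} by choosing $\frakF_1=\Fvc(G_1)$, $\frakF_2=\Fvc(G_2)$ and $\frakG=\Fvc(G)$. Both $\Fvc(G_1)$ and $\Fvc(G_2)$ are subgroup-closed, since a subgroup of a virtually cyclic group is again virtually cyclic, so the hypotheses on $\frakF_1,\frakF_2$ hold and $\cd_{\frakF_i}G_i=\uucd G_i$. To see that $\frakG\subset\bar\frakF$, let $V\in\Fvc(G)$ and write $p_i\colon G\to G_i$ for the two projections. Each $p_i(V)$ is a quotient of the virtually cyclic group $V$, hence virtually cyclic, and $V\leq p_1(V)\times p_2(V)\in\frakF_1\times\frakF_2=\frakF$; thus $V\in\bar\frakF$. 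It therefore only remains to produce a constant $k$ with $\cd_{\frakG\cap K}K\leq k$ for every $K\in\frakF$, and to verify that $k=3$ is admissible.

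Fix $K=H_1\times H_2\in\frakF$, so each $H_i$ is virtually cyclic. First I would note that $\frakG\cap K=\Fvc(K)$: any intersection $V\cap K$ with $V\in\Fvc(G)$ is a virtually cyclic subgroup of $K$, while conversely every virtually cyclic $W\leq K$ is virtually cyclic in $G$ and equals $W\cap K$. Hence $\cd_{\frakG\cap K}K=\uucd K$, and the task reduces to bounding $\uucd K$ uniformly over all products of two virtually cyclic groups. The difficulty is that the finite parts of $H_1$ and $H_2$ are unbounded, so a finite-index estimate such as Proposition~\ref{prop:gd-for-finite-index} cannot give a uniform bound directly. Instead I would pass to quotients by maximal finite normal subgroups: each infinite virtually cyclic $H_i$ has a unique maximal finite normal subgroup $F_i$ with $H_i/F_i\iso\Z$ or $D_{\infty}$, and when $H_i$ is finite we take $F_i=H_i$. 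Then $F_1\times F_2$ is a finite normal subgroup of $K$ with $K/(F_1\times F_2)\iso(H_1/F_1)\times(H_2/F_2)$, so Proposition~\ref{prop:finkernel} gives $\uucd K=\uucd\bigl((H_1/F_1)\times(H_2/F_2)\bigr)$.

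The quotient $\bar K=(H_1/F_1)\times(H_2/F_2)$ is a product of two groups each lying in the finite list $\{1,\Z,D_{\infty}\}$, so $\bar K$ ranges over only finitely many isomorphism types, all of which are polycyclic-by-finite (indeed virtually $\Z^n$ with $n\leq2$) of Hirsch length at most $2$. By L\"uck--Weiermann's computation for virtually poly-$\Z$ groups~\cite{lueckweiermann} (the worst cases being $\Z^2$, $\Z\times D_{\infty}$ and $D_{\infty}\times D_{\infty}$, which are the $2$-dimensional crystallographic groups, cf.~also~\cite{lafontortiz}) one has $\uucd\bar K\leq h(\bar K)+1\leq3$. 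Consequently $\uucd K\leq3$ for every $K\in\frakF$, so Proposition~\ref{prop:directproduct} applies with $k=3$ and yields $\uucd G=\cd_{\frakG}G\leq\cd_{\frakF_1}G_1+\cd_{\frakF_2}G_2+3=\uucd G_1+\uucd G_2+3$. I expect the main obstacle to be precisely this uniform bound $\uucd K\leq3$: one has to control the Bredon dimension of a product of two virtually cyclic groups with arbitrarily large torsion, which is what forces the reduction through Proposition~\ref{prop:finkernel} together with the classification of virtually cyclic groups, rather than a naive index argument.
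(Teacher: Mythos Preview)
Your proof is correct and follows the same route as the paper: apply Proposition~\ref{prop:directproduct} with $\frakF_i=\Fvc(G_i)$ and $\frakG=\Fvc(G)$, then verify $\uucd K\leq 3$ for every $K\in\frakF$ using L\"uck--Weiermann. The one difference is that your detour through Proposition~\ref{prop:finkernel} is unnecessary: the L\"uck--Weiermann bound~\cite{lueckweiermann}*{Theorem~5.13} already applies directly to any virtually polycyclic group and depends only on $\vcd K$ (equivalently the Hirsch length), not on the size of the finite part. Since $K=H_1\times H_2$ with each $H_i$ virtually cyclic is itself virtually polycyclic with $\vcd K\leq 2$, one gets $\uucd K\leq\uugd K\leq 3$ immediately, without first passing to the quotient by $F_1\times F_2$. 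Your concern about ``arbitrarily large torsion'' forcing a reduction is therefore misplaced here, though the argument you give is of course valid.
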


\begin{proof}
    Every $K\in \Fvc(G_{1})\times \Fvc(G_{2})$ is virtually 
    polycyclic of $\vcd K\leq 2$. Thus $\uucd K \leq \uugd K \leq 
    3$ by~\cite{lueckweiermann}*{Theorem~5.13} and the result follows 
    from Proposition~\ref{prop:directproduct}.
\end{proof}

\begin{remark}\label{rem:sharp}
    The bound in Corollary \ref{cor:directproduct} is sharp.  For
    example, $\uucd(\Z\times\Z)=3$, which follows from the proof of
    \cite[Theorem 5.12 (iii)]{lueckweiermann}, see also
    \cite[Corollary 4.3]{fluch-thesis}.
\end{remark}

\begin{remark}
    \label{lem:directproduct-aux}
    Suppose $G = G_{1}\times G_{2}$ and $\frakF= \frakF_{1}\times
    \frakF_{2}$ are as in Proposition~\ref{prop:directproduct}.  Let
    $X_{i}$ and $Y_{i}$ be $G_{i}$-sets and let $X= X_{1}\times X_{2}$
    and $Y = Y_{1}\times Y_{2}$ be $G$-sets with the obvious
    $G$-action.  We denote by $p_{i}\: G\to G_{i}$ be the canonical
    projections.  Since $\frakF_{1}$ and $\frakF_{2}$ are assumed to
    be closed under forming subgroups it follows that for every $H\in
    \bar\frakF$, $p_{1}(H)\times p_{2}(H)\in \frakF$.  Hence
    homomorphism
    \begin{equation*}
        f\: \Z[?,X]_{G} \to \Z[?,Y]_{G}
    \end{equation*}
    of right $\OFG$-modules extends to a homomorphism $f\: \Z[?,X]_{G}
    \to \Z[?,Y]_{G}$ of $\OC{G}{\bar\frakF}$-modules as follows: for
    every $H\in \bar\frakF$ let $f_{H} = f_{p_{1}(H)\times p_{2}(H)}.$
\end{remark}

Also note that the natural projections $p_i: G \to G_i$ give rise to
functors
\begin{equation*}
    p_i\: \OFG \to \OC{G_i}{\frakF_i}.
\end{equation*}

\begin{proof}[Proof of Proposition~\ref{prop:directproduct}]    
    Let $P_{*}\epi \uZ_{\frakF_{1}}$ and $Q_{*}\epi\uZ_{\frakF_{2}}$
    be free resolutions.  Hence there exist $G_{1}$-sets $X_{i}$ and
    $G_{2}$-sets $Y_{j}$ such that $P_{i} = \Z[?, X_{i}]_{G_{1}}$ and
    $Q_{j} = \Z[?, Y_{j}]_{G_{2}}$.
    
    Let $P'_{*} = \RES_{p_{1}}P_{*}$ and $Q'_{*} = \RES_{p_{2}}
    Q_{*}$.  These $\OFG$-modules are of the form
    \begin{equation*}
        P'_{i} = \Z[?, X_{i}]_{G}
	\qquad \text{and} \qquad
	Q'_{j} = \Z[?, Y_{i}]_{G},
    \end{equation*}
    where the action of $G$ on $X_{i}$ and $Y_{i}$ is given by $gx =
    p_{1}(g)x$ and $gy = p_{2}(g)y$ respectively.  For each $i,j\in
    \N$ we have an identification of $\OFG$-modules
    \begin{equation*}
        P'_{i}\otimes Q'_{j} = \Z[?, X_{i}\times Y_{j}]_{G}.
    \end{equation*}
    Here $G$ acts diagonally  on $X_{i}\times Y_{j}$. 
   
    This gives rise to a double complex in the usual way, see for
    example~\cite{weibel-94}*{pp.~58f.}.  Denote by $C_{k}$ its total
    complex:
    \begin{equation*}
        C_{k} = \coprod_{i=0}^{k}  \Z[?, X_{i}\times Y_{k-i}]_{G}.
    \end{equation*}
    The augmentation maps $\varepsilon_{1}\: P_{0}\epi
    \uZ_{\frakF_{1}}$ and $\varepsilon_{2}\: Q_{0}\epi
    \uZ_{\frakF_{1}}$ induce an augmentation map $\varepsilon\: 
    C_{0}\epi\uZ_{\frakF}$. Altogether we obtain a resolution
    \begin{equation}
	\ldots \to C_{2} \to C_{1}\to C_{0} \epi \uZ_{\frakF}
	\label{eq:proof-directproduct-1}
    \end{equation}
    of the trivial $\OFG$-module by free $\OFG$-modules.
    
    Now the free $\OFG$-modules $C_{k}$ are also free
    $\OC{G}{\bar\frakF}$-modules.  Since the families $\frakF_{1}$ and
    $\frakF_{2}$ are assumed to be subgroup closed we can extend,
    using Remark~\ref{lem:directproduct-aux}, every morphism in the
    sequence~\eqref{eq:proof-directproduct-1} to a morphism of the
    corresponding $\OC{G}{\bar\frakF}$-modules.  It follows that we
    obtain a resolution
    \begin{equation}
	\ldots \to C_{2} \to C_{1}\to C_{0} \epi \uZ_{\bar\frakF}
	\label{eq:proof-directproduct-2}
    \end{equation}
    of the trivial $\OC{G}{\bar\frakF}$-module by free 
    $\OC{G}{\bar\frakF}$-modules.
    
    Now assume that $m= \cd_{\frakF_{1}} G_{1}$ and $n=
    \cd_{\frakF_{2}}G_{2}$.  Then it follows from an Eilenberg Swindle
    that there are free resolutions $P_{*}\epi \uZ_{\frakF_{1}}$ and
    $Q_{*} \epi \uZ_{\frakF_{2}}$ as above, of lengths $m$ and $n$
    respectively.  This implies that $C_{k}=0$ for all $k> m+n.$ In
    particular
    \begin{equation*}
        \cd_{\bar \frakF} G \leq \cd_{\frakF_{1}} G_{1} + 
	\cd_{\frakF_{2}}G_{2}.
    \end{equation*}
    
    Let $K\in \bar\frakF$.  Then $K\leq K_{1}\times K_{2}$ for some
    $K_{1}\times K_{2}\in \frakF$.  Since $\frakG$ is assumed to be
    closed under forming subgroups it follows that $\emptyset\neq
    \frakG\cap K \subset \frakG\cap (K_{1}\times K_{2}).$ Therefore we
    have $\cd_{\frakG\cap K} K \leq \cd_{\frakG\cap(K_{1}\times
    K_{2})} (K_{1}\times K_{2})$.  By assumption the latter is bounded
    by $k$.  Thus we have $\cd_{\frakG}G \leq \cd_{\bar\frakF} G + k$
    by Proposition~\ref{prop:cd-for-larger-families} and the claim of
    the proposition follows.
\end{proof}

\begin{remark}
    Note that the special case of Corollary~\ref{cor:directproduct}
    follows almost immediately by applying Mart\'inez-P\'erez'
    spectral sequence Proposition~\ref{prop:martinez} twice, but we
    have included the above for its generality and for being rather
    elementary.  The alternative argument is as follows: Consider
    $G=G_{1}\times G_{2}$ as an extension
    \begin{equation*}
        G_{1} \mono G \epi G_{2}
    \end{equation*}
    By Proposition~\ref{prop:martinez} we have $\uucd G \leq m + \uucd
    G_{2}$ where $m$ is the supremum of $\uucd H$ where $H$ ranges
    over all $H\leq G$ such that $G_{1}\leq G$ such that $H/G_{1}$ is
    virtually cyclic.  But these $H\leq G$ are of the form $H =
    G_{1}\times V$ with $V$ a virtually cyclic subgroup of~$G_{2}$.
   This gives rise to an extension
    \begin{equation*}
        V\mono H \epi G_{1}.
    \end{equation*}
    Applying Proposition~\ref{prop:martinez} again yields that $\uucd 
    H\leq n + \uucd G_{1}$ where $n$ is the supremum of $\uucd L$ 
    where $L$ ranges over all $L\leq H$ with $V\leq L$ and $L/V$ is 
    virtually cyclic. These $L$ are of the form $V\times W$ with $W$ 
    a virtually cyclic subgroup of $G_{1}$. Thus
    \begin{equation*}
        \uucd G \leq k + \uucd G_{1} + \uucd G_{2}
    \end{equation*}
    where $k$ is the supremum of $\uucd (V_{1}\times V_{2})$ with
    $V_{1}$ and $V_{2}$ ranges over all virtually cyclic subgroups of
    $G_{1}$ and $G_{2}$ respectively.  In particular $k\leq 3$
    by~\cite{lueckweiermann}*{Theorem~5.13}.
\end{remark}

%
%

\section{Infinite cyclic extensions of abelian groups}

\begin{lemma}\label{modoutAbar}
    Let $G$ be a torsion-free abelian-by-(infinite cyclic) group, 
    i.e.~there is a short exact sequence
    \begin{equation*}
        A \mono G \epi \langle t \rangle
    \end{equation*}
    with $A$ abelian and $\langle t \rangle \cong \Z.$ Consider the
    subgroup $\bar A =\{a\in A \mid a^t=a\}$.  Then $G/\bar A$ is
    torsion-free.
\end{lemma}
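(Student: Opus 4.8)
The plan is to present $G/\bar A$ as an extension of one torsion-free group by another, and then invoke the elementary fact that such an extension is torsion-free. Write $\phi$ for the automorphism of $A$ given by conjugation by $t$, so that $a^{t}=\phi(a)$ and $\bar A=\{a\in A\mid \phi(a)=a\}$ is the fixed subgroup of $\phi$. Before taking the quotient I would first check that $\bar A\normal G$, so that $G/\bar A$ makes sense: the abelian group $A$ normalises $\bar A$, and $t$ normalises $\bar A$ because $\phi$ restricts to the identity on $\bar A$; since $G=\langle A,t\rangle$ this yields normality. The quotient then sits in a short exact sequence
\begin{equation*}
    A/\bar A \mono G/\bar A \epi \langle t\rangle,
\end{equation*}
in which $\langle t\rangle\cong\Z$ is torsion-free.

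The key step is to show that the kernel $A/\bar A$ is torsion-free, and this is where the hypothesis that $G$ --- and hence its subgroup $A$ --- is torsion-free enters. Suppose $\bar a\in A/\bar A$ satisfies $\bar a^{\,n}=\bar 1$ for some $n\geq 1$; then $a^{n}\in\bar A$, so $\phi(a^{n})=a^{n}$. As $\phi$ is a homomorphism and $A$ is abelian, this rearranges to $(\phi(a)a^{-1})^{n}=1$, exhibiting $\phi(a)a^{-1}$ as a torsion element of $A$. Since $A$ is torsion-free this forces $\phi(a)=a$, that is $a\in\bar A$ and $\bar a=\bar 1$. Hence $A/\bar A$ is torsion-free.

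Finally I would conclude with the standard observation that an extension of torsion-free groups is torsion-free: any finite-order element of $G/\bar A$ must map to a finite-order element of the torsion-free group $\langle t\rangle$ and therefore lie in the kernel $A/\bar A$, where it is again of finite order and hence trivial. I do not anticipate any real obstacle here; the only step calling for care is the computation that $A/\bar A$ is torsion-free, and even that reduces to a single manipulation once $\bar A$ is identified with the fixed subgroup of $\phi$ and torsion-freeness of $A$ is brought to bear.
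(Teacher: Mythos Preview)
Your proof is correct and follows essentially the same route as the paper's: both reduce to showing that $A/\bar A$ is torsion-free via the computation $(\phi(a)a^{-1})^{n}=1\Rightarrow \phi(a)=a$, and then use the short exact sequence $A/\bar A\mono G/\bar A\epi\langle t\rangle$. The only cosmetic difference is that the paper observes $\bar A$ is central (hence normal) rather than checking normality directly, and it leaves the final extension step implicit.
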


\begin{proof}
    $\bar A$ is obviously a central subgroup in $G$ and we have a
    short exact sequence
    \begin{equation*}
	A/\bar A \mono G/\bar A \stackrel{\pi}{\epi} \langle t\rangle.
    \end{equation*}
    To prove the claim it suffices to show that $A/\bar A$ is
    torsion-free.

    Suppose there is an $a \in A$ such that $\pi(a)^n=\pi(a^n)=1.$
    This implies that $a^n \in \bar A$ and hence $(a^n)^t=a^n$.  Since
    $A$ is abelian, we have $(a^ta^{-1})^n =1.$ But $A$ is
    torsion-free and hence $a^t = a$.  This implies $a \in \bar A$.
\end{proof}

In a torsion-free abelian-by-(infinite cyclic) group, the generator
$t$ of the infinite cyclic group acts by automorphisms on the abelian
group $A$.  As we will see in Proposition~\ref{ab-by-cyc}, there is no
problem if $t$ acts trivially or freely on the non-trivial elements of
$A$.  The main problem arises when $t$ acts by a finite order
automorphism.  But the following, folklore, version of Selberg's Lemma
tells us that the order of this automorphism has a bound only
depending on $A$.  We shall state the Lemma as a special case
of~\cite{wehrfritz70}*{Theorem~T1}:

\begin{lemma}\label{selberg}\cite{wehrfritz70}
    Let $\Gamma$ be a group of automorphisms of a torsion-free abelian
    group $A$ of finite Hirsch length.  Then there exists an integer
    $m(A)$ such that every periodic subgroup of $\Gamma$ has order at
    most $m(A)$.
\end{lemma}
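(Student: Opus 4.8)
The plan is to linearise the action and then invoke two classical theorems about linear groups. First I would pass from $A$ to its rationalisation $V = A \otimes_{\Z} \Q$. Since $A$ is torsion-free of finite Hirsch length $h = h(A)$, the natural map $A \to V$ is injective and $V$ is a $\Q$-vector space of dimension $h$ (the Hirsch length of a torsion-free abelian group being precisely $\dim_{\Q}(A\otimes\Q)$). Every automorphism $\varphi$ of $A$ induces a $\Q$-linear automorphism $\varphi\otimes\Q$ of $V$, and because $A$ embeds in $V$ this assignment is a \emph{faithful} group homomorphism $\Gamma \hookrightarrow \GL(V)\cong \GL_h(\Q)$. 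Thus it suffices to bound the order of periodic subgroups of $\GL_h(\Q)$ by a quantity depending only on $h$.

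Next I would record the two ingredients. By Schur's theorem, every periodic subgroup of $\GL_h(\Q)$ (indeed of $\GL_h(\C)$) is locally finite; equivalently, every finitely generated periodic subgroup is finite. On the other hand, Minkowski's theorem supplies an integer $M(h)$ bounding the order of every finite subgroup of $\GL_h(\Q)$: given a finite subgroup $F$, the subgroup $L = \sum_{f\in F} f(\Z^h)$ generated by the $F$-orbit of the standard lattice is a finitely generated, torsion-free, $F$-invariant subgroup of $V$ containing $\Z^h$, hence a free abelian group of rank $h$. A $\Z$-basis of $L$ is then a $\Q$-basis of $V$ that identifies $F$ with a subgroup of $\GL_h(\Z)$, where the Minkowski bound applies.

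Combining these gives the lemma. Let $P\leq\Gamma$ be periodic, viewed inside $\GL_h(\Q)$. By Schur, $P$ is locally finite, and by Minkowski every finite subgroup of $P$ has order at most $M(h)$. A locally finite group in which all finite subgroups have order $\leq M(h)$ is itself finite of order $\leq M(h)$: any $M(h)+1$ of its elements would generate a finite subgroup of order exceeding $M(h)$, a contradiction. Hence $|P|\leq M(h)$, and setting $m(A)=M(h(A))$ completes the proof.

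The step I expect to be the genuine obstacle --- the one doing the real work beyond bookkeeping --- is the passage from ``periodic'' to ``finite of bounded order.'' Bounded exponent alone would not suffice, by the Burnside problem, so one cannot avoid a structural input: it is Schur's local finiteness of characteristic-zero torsion linear groups that rules out an infinite periodic subgroup with uniformly small finite subgroups, and only then does the Minkowski bound convert local finiteness into an absolute bound depending on $h(A)$ alone.
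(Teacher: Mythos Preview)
The paper does not give its own proof of this lemma; it is quoted as a special case of Wehrfritz's Theorem~T1 and treated as a black box. Your argument is correct and is the standard route to such a result: faithfully embed $\Gamma$ in $\GL_{h}(\Q)$ via $A\hookrightarrow A\otimes\Q$, invoke Schur's theorem to see that any periodic subgroup is locally finite, and then apply Minkowski's bound on finite subgroups of $\GL_{h}(\Q)$ to cap the order. Your closing remark is also on target: local finiteness (not merely bounded exponent) is the essential structural input, and it is exactly what Schur supplies in characteristic zero.
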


For our purpose we need the following consequence of Selberg's Lemma,
which is probably known.  We include it for completeness.

\begin{lemma}
    \label{lem:selberg-implication}
    Let $A$ be a torsion-free abelian group with finite Hirsch length
    $h(A)$.  Then there exists a integer $\nu= \nu(A)$ which depends
    only on $h(A)$ such that for any automorphism $t$ of $A$ the
    finite orbits of elements in $A$ under the action of $t$ have at
    most length $\nu$.
\end{lemma}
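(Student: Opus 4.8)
The plan is to reduce the statement to a bound on the order of finite-order elements of $\GL_{r}(\Z)$ for $r\leq h(A)$, and then to feed this into Selberg's Lemma (Lemma~\ref{selberg}). The key point to keep in mind from the outset is that the automorphism $t$ of $A$ may itself have infinite order, so one cannot apply Selberg's Lemma to $\langle t\rangle\leq \operatorname{Aut}(A)$ directly: a \emph{single} finite orbit only constrains $t$ on the subgroup that orbit generates, not on all of $A$.

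First I would fix an element $a\in A$ whose $t$-orbit is finite, of length $\ell$, so that $\ell$ is the smallest positive integer with $t^{\ell}(a)=a$. I then set
\begin{equation*}
    B = \langle a, t(a), \ldots, t^{\ell-1}(a)\rangle \leq A.
\end{equation*}
Because $t^{\ell}(a)=a$, the subgroup $B$ is $t$-invariant (the generator $t$ merely permutes the finitely many elements $t^{j}(a)$) and finitely generated, and as a subgroup of the torsion-free group $A$ it is itself torsion-free. Hence $B\cong \Z^{r}$ with $r=\operatorname{rank}(B)\leq h(A)$, since for torsion-free abelian groups the Hirsch length is exactly the torsion-free rank and ranks do not increase under passing to subgroups.

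Next I would verify that $t$ restricts to an automorphism $t|_{B}$ of $B$ of order \emph{exactly} $\ell$: the power $t^{k}$ fixes every generator $t^{j}(a)$ if and only if $t^{k}(a)=a$, so the least positive such $k$ is $\ell$. Thus $\langle t|_{B}\rangle$ is a cyclic, hence periodic, subgroup of $\operatorname{Aut}(B)\cong \GL_{r}(\Z)$ of order $\ell$. To obtain a bound depending only on $h(A)$ rather than on $B$, I would embed everything into one fixed group: writing $h=h(A)$, the block-diagonal inclusion $\GL_{r}(\Z)\hookrightarrow \GL_{h}(\Z)$ is an order-preserving injective homomorphism, and it identifies $\langle t|_{B}\rangle$ with a periodic subgroup of $\operatorname{Aut}(\Z^{h})$ of the same order $\ell$. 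Applying Selberg's Lemma to the torsion-free abelian group $\Z^{h}$, which has Hirsch length $h$, yields a constant $m(\Z^{h})$ bounding the order of every periodic subgroup of $\operatorname{Aut}(\Z^{h})$, so $\ell\leq m(\Z^{h})$. Setting $\nu(A):=m(\Z^{h})$, a quantity depending only on $h=h(A)$, then completes the argument.

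The step I expect to be the main obstacle is conceptual rather than computational: recognising that the possibly-infinite-order $t$ must be replaced by its restriction to the finitely generated invariant subgroup $B$ generated by the orbit, on which it becomes a genuinely periodic automorphism whose order equals the orbit length. The remaining care is in checking that this order is exactly $\ell$ (not merely a divisor or multiple) and in arranging the dependence on $h(A)$ alone via the harmless block embedding into $\GL_{h}(\Z)$; one could equally invoke Minkowski's classical bound on the torsion of $\GL_{r}(\Z)$ at this last step, but using Lemma~\ref{selberg} keeps the argument within the framework already set up. Everything else is routine bookkeeping about ranks.
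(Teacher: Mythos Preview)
Your proof is correct and takes a genuinely different route from the paper's. The paper rationalises: it embeds $A$ in $A\otimes\Q\cong\Q^{n}$, extends $t$ to $\varphi\in\GL_{n}(\Q)$, lets $U\subset\Q^{n}$ be the subspace of all $\varphi$-periodic vectors, and then \emph{modifies} $\varphi$ to be the identity on a complement of $U$, producing a globally periodic automorphism $\psi$ of $\Q^{n}$ whose order bounds (indeed, is divisible by) every finite orbit length. You instead stay over $\Z$ and \emph{restrict}: for a single element $a$ with a finite orbit you pass to the finitely generated $t$-invariant subgroup $B=\langle a,t(a),\ldots,t^{\ell-1}(a)\rangle\cong\Z^{r}$, on which $t$ becomes honestly periodic of order exactly $\ell$, and then feed this into Lemma~\ref{selberg} via the block embedding $\GL_{r}(\Z)\hookrightarrow\GL_{h}(\Z)$.

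Your argument is more elementary---no tensoring with $\Q$, no choice of complement---and the verification that the order of $t|_{B}$ equals $\ell$ is clean. The paper's argument, by treating all periodic vectors at once, yields the mildly stronger conclusion that the finite orbit lengths all divide a \emph{single} integer $m\leq\nu$, but this extra uniformity is not used downstream (Proposition~\ref{ab-by-cyc} only needs the bound). Either approach lands on Lemma~\ref{selberg}; they just apply it to $\Q^{n}$ versus $\Z^{h}$.
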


\begin{proof} 
    Let $n= h(A)$. Then $A\otimes\Q \isom \Q^{n}$ and $A$ can
    be viewed as an additive subgroup of $\Q^{n}$ by $a\mapsto
    a\otimes 1$.  The automorphism $t$ of $A$ extends to an
    automorphism $t\otimes \id\: \Q^{n}\to \Q^{n}$ of
    $\Q$-vector spaces, which we denote by $\varphi$.
    
    Let $U = \{a\in A \mid \varphi^{k}(a) = a \text{ for some $0\neq
    k\in\Z$}\}$.  Then $U$ is a $\varphi$-invariant subspace of
    $\Q^{n}$.  It has a complement $V$ in $\Q^{n}$ and there exists a
    unique linear map $\psi\: \Q^{n}\to \Q^{n}$ which agrees with
    $\varphi$ on $U$ and which is the identity on~$V$.  Then $\psi$ is
    an isomorphism which is periodic by construction. 
    
    By Selberg's Lemma there exists a number $\nu(n)$ such that every
    periodic automorphism of $\Q^{n}$ has order at most~$\nu(n)$.
    Therefore $\psi^{m} = \id$ for some $1\leq m\leq \nu(n)$.  In
    particular we have that $\varphi^{m}(a)= a$ for each $a\in U\cap
    A$.
\end{proof}

\begin{proposition}\label{ab-by-cyc}
    Let $A$ be a torsion-free abelian group of finite Hirsch
    length~$h$.  There is a recursively defined integer $f(h)$
    depending only on $h$ such that for every infinite cyclic
    extension $G=A\rtimes \langle t\rangle$ we have
    \begin{equation*}
	\uucd G\leq f(h).
    \end{equation*}
\end{proposition}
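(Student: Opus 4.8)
The plan is to argue by induction on the Hirsch length $h=h(A)$, with base case $f(0)=0$ (if $A=1$ then $G$ is infinite cyclic, so $\uucd G=0$). A preliminary observation used throughout: a torsion-free abelian group of finite Hirsch length embeds into $A\otimes\Q\iso\Q^{h}$ and is therefore countable, so that Lemma~\ref{lem:cd-A} applies to $A$ and to all the abelian subquotients that will arise, giving in particular $\uucd A\le h+2$. Writing $\alpha$ for the automorphism of $A$ given by conjugation by $t$, the entire difficulty is governed by the $\alpha$-invariant subgroup $P\le A$ consisting of the elements lying in a \emph{finite} $\alpha$-orbit. I split into the cases $P=1$ and $P\neq1$.

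Suppose first that $P=1$. Then $\alpha^{k}(a)\neq a$ for every $a\neq1$ and every $k\neq0$; since $A$ is abelian and normal, its conjugacy classes are exactly the $\alpha$-orbits, so $\langle t\rangle$ acts freely on the conjugacy classes of nontrivial elements of $A$. This is precisely the hypothesis of Proposition~\ref{prop:fluch}, which yields $\uugd G\le\uugd A+1$. Combining $\uucd A\le h+2$ with Proposition~\ref{prop:lm} gives $\uugd A\le\max(3,h+2)$, whence $\uucd G\le\uugd G\le\max(4,\,h+3)$.

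Now suppose $P\neq1$. I first use Selberg's Lemma to eliminate the finite orbits: by Lemma~\ref{lem:selberg-implication} there is an integer $\nu=\nu(h)$, depending only on $h$, bounding the length of every finite $\alpha$-orbit, so with $m=\lcm(1,\dots,\nu)$ the automorphism $\alpha^{m}$ fixes $P$ pointwise. I pass to the finite-index subgroup $H=A\rtimes\langle t^{m}\rangle$, on which $s=t^{m}$ acts by $\alpha^{m}$; here the fixed subgroup $\bar A_{H}=\{a\in A\mid a^{s}=a\}$ equals $P$ (an element fixed by $\alpha^{m}$ has finite orbit and so lies in $P$, and conversely), and in particular $\bar A_{H}\neq1$. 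As $P=\bar A_{H}$ is central in $H$, Lemma~\ref{modoutAbar} shows that $Q:=H/P$ is torsion-free, and $Q=(A/P)\rtimes\langle\bar s\rangle$ is again a torsion-free abelian-by-(infinite cyclic) group, now with $h(A/P)=h-h(P)<h$; the inductive hypothesis (with $f$ non-decreasing by construction) gives $\uucd Q\le f(h-1)$. To climb back up I apply Proposition~\ref{prop:martinez} to $P\mono H\epi Q$: any $P\le H'\le H$ with $H'/P$ virtually cyclic has $H'/P$ torsion-free virtually cyclic, hence trivial or infinite cyclic, so by centrality of $P$ such an $H'$ is abelian and torsion-free of Hirsch length at most $h(P)+1\le h+1$, whence $\uucd H'\le h+3$ by Lemma~\ref{lem:cd-A}. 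Thus $\uucd H\le(h+3)+\uucd Q\le h+3+f(h-1)$. Finally I transfer from $H$ back to $G$: Proposition~\ref{prop:lm} gives $\uugd H\le\max(3,\uucd H)$ and Proposition~\ref{prop:gd-for-finite-index} gives $\uugd G\le m\cdot\uugd H$, so $\uucd G\le\uugd G\le m\cdot\max\bigl(3,\,h+3+f(h-1)\bigr)$, with $m=\lcm(1,\dots,\nu(h))$ depending only on $h$. Taking the maximum over the two cases defines $f(h)$ recursively in terms of $f(h-1)$ and $\nu(h)$.

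I expect the genuine obstacle to be the bookkeeping of the finite-order part of the action. The naive hope—apply Proposition~\ref{prop:martinez} with $N=\bar A$ and induct on $h(A/\bar A)$—fails exactly when $\bar A=1$ yet $t$ does \emph{not} act freely (there are nontrivial finite orbits), and using $N=1$ there is circular. The crucial device is Selberg's Lemma, which bounds the finite orbits \emph{uniformly} and thereby licenses the replacement of $G$ by the finite-index subgroup $H$ in which the periodic elements become genuinely fixed, turning $P$ into $\bar A_{H}$ and producing a quotient $Q$ of strictly smaller Hirsch length. Verifying this conversion—that $\bar A_{H}=P$, that $P$ is central so the relevant intermediate subgroups are abelian, and that $Q$ is again torsion-free abelian-by-cyclic of smaller Hirsch length—is the heart of the argument; the passages between $\uucd$ and $\uugd$ and the finite-index inflation only cost the harmless factors recorded above.
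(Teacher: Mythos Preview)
Your proof is correct and follows essentially the same route as the paper's: induction on $h$, Proposition~\ref{prop:fluch} when the action has no nontrivial finite orbits, and otherwise Selberg's Lemma (Lemma~\ref{lem:selberg-implication}) to pass to a finite-index subgroup where Lemma~\ref{modoutAbar} and Proposition~\ref{prop:martinez} reduce the problem to strictly smaller Hirsch length, with Lemma~\ref{lem:cd-A} controlling the intermediate abelian subgroups. The only difference is organisational: the paper splits your case $P\neq 1$ into two subcases according to whether $\bar A\neq 1$ already (so that no finite-index passage is needed) or $\bar A=1$ but some power of $t$ has a fixed point, yielding three cases and a slightly smaller recursive bound $f(h)=r_{h}(h+3+f(h-1))$.
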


\begin{proof}
    We prove the proposition by induction on the Hirsch length of $A$.
    Since $A$ is torsion-free, $h=0$ implies that $A$ is trivial.  In
    this case $G$ is infinite cyclic and therefore $f(0)=0.$

    Now suppose $h\geq 1$ and assume that the statement is true for
    all torsion-free abelian groups $B$ with $h(B)<h$.  Let $A$ be a
    torsion-free abelian group with Hirsch length~$h$ and let $\bar A$
    be as in Lemma~\ref{modoutAbar}.  Then precisely one of the
    following three cases occurres.
    
    \medskip
    
    (1) \textsl{$\{1\}\neq \bar A$:} As in the proof of 
    Lemma~\ref{modoutAbar} we have short exact sequenc
    \begin{equation*}
        A/\bar A \mono G/\bar A \epi \langle t\rangle
    \end{equation*}
    with $A/\bar A$ torsion-free. Since $A$ is torsion-free and $\bar
    A\neq \{1\}$ it follows that $h(\bar A)\geq 1$ and thus $h(A/\bar
    A) < h$.  Then
    \begin{equation*}
        \uucd (G/\bar A) \leq f(h-1)
    \end{equation*}
    by induction.
    
    Consider the short exact sequence
    \begin{equation*}
        \bar A\mono G\mono G/\bar A.
    \end{equation*}
    We use Proposition~\ref{prop:martinez} to find a bound for $\uucd
    G$.  Let $S$ be a subgroup of $G$ such that $\bar A\leq S$ and
    $S/\bar A$ is virtually cyclic.  Since $G/\bar A$ is torsion free,
    $S/\bar A =\langle s\rangle$ is infinite cyclic.  Then the fact
    that $\bar A$ is a central subgroup of $G$ implies that
    \begin{equation*}
        S = \bar A \times \langle s\rangle.
    \end{equation*}
    In particular $S$ is countable abelian with finite Hirsch length
    $h (S) = h(\bar A) +1$.  Therefore we have $\uucd S\leq h(\bar
    A)+3$ by Lemma~\ref{lem:cd-A}.  Hence
    Proposition~\ref{prop:martinez} gives
    \begin{align*}
         \uucd G 
	 & \leq
	 h(\bar A) + 3 + \uucd G/\bar A
	 \\
	 & \leq h + 3 + f(h-1).
    \end{align*}
    
    \medskip
    
    (2) \textsl{$\{1\} = \bar A$ but there is an element $1\neq a\in
    A$ and a positive integer $m$ such that $a^{t^{m}}=a$:}
    Lemma~\ref{lem:selberg-implication} implies that $m$ is bounded by
    a number $\nu$ that only depends on $h$.  Let $r_{h} =
    \lcm(1,\ldots,\nu)$ and set $A_{0} = A$ and $t_{0}= t^{r_{h}}$.
    Then $G_{0} = A_{0}\rtimes \langle t_{0}\rangle$ is a subgroup of
    $G$ with index $|G:G_{0}| = r_{h}$ and $\bar A_{0}\neq \{1\}$.
    Thus Case~(1) applies to $G_{0}$.  Then
    \begin{align*}
        \uucd G 
	& \leq
	\uugd G
	\\
	& \leq
	r_{h} \uugd G_{0}
	&& 
	\text{(Proposition~\ref{prop:gd-for-finite-index})}
	\\
	& \leq
	r_{h} \max(3,\uucd G_{0})
	&& \text{(Proposition~\ref{prop:lm})}
	\\
	& \leq
	r_{h} (h + 3 +  f(h-1)).
	&&
	\text{(Case (1))}
    \end{align*}
    
    \medskip
    
    (3) \textsl{$\{1\} = \bar A$ and for all $1\neq a\in A$ and all
    $m\neq 0$ we have $a^{t^{m}}\neq a$:} Then
    Proposition~\ref{prop:fluch} applies to $G$ and it follows that
    $\uugd G\leq \uugd A + 1$.  Thus
    \begin{equation*}
        \uucd G \leq \uugd G \leq \uugd A+1 \leq h+3
    \end{equation*}
    where the last inequality is due to Theorems ~4.3 and ~5.13 
    in~\cite{lueckweiermann}.
    
    \medskip
    
    Therefore, if we recursively define $f(h)=r_{h}( h + 3 + f(h-1))$,
    then
    \begin{equation*}
        \uucd G \leq f(h)
    \end{equation*}
     in all three cases.
\end{proof}


\section{Nilpotent-by-abelian groups}

\noindent For any group $G$ we denote its centre by $Z(G)$. 

\begin{lemma}\label{factoroutcentre}
    Let $G$ be a group such that there is a short exact sequence
    \begin{equation*}
	N \mono G \epi Q,
    \end{equation*}
    where $N$ is torsion-free nilpotent and $Q$ is torsion-free.  Then
    $G/Z(N)$ is torsion-free.
\end{lemma}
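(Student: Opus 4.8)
The plan is to reduce everything to the general principle that a group extension with torsion-free kernel and torsion-free quotient is itself torsion-free, so that the only genuine content becomes a structural fact about torsion-free nilpotent groups.

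First I would record the formal set-up. The centre $Z(N)$ is a characteristic subgroup of $N$, and $N\normal G$, so $Z(N)$ is normal in $G$ and $G/Z(N)$ is defined. The given extension $N\mono G\epi Q$ then induces a short exact sequence
\[
    N/Z(N)\mono G/Z(N)\epi Q .
\]
Next I would isolate the reduction principle, exactly as in the proof of Lemma~\ref{modoutAbar}: any element of finite order in $G/Z(N)$ projects to an element of finite order in $Q$, which must be trivial since $Q$ is torsion-free, so such an element already lies in $N/Z(N)$. Consequently it suffices to prove that $N/Z(N)$ is torsion-free.

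The crux is therefore to show that $Z(N)$ is \emph{isolated} in $N$, i.e.\ that $g^{k}\in Z(N)$ for some $k\geq 1$ forces $g\in Z(N)$; this is where the purely abelian manipulation of Lemma~\ref{modoutAbar} breaks down and the nilpotency of $N$ must be used. I would deduce it from the classical theorem of Mal'cev that the factors of the upper central series of a torsion-free nilpotent group are torsion-free. Writing $1=Z_{0}(N)\leq Z_{1}(N)\leq\cdots\leq Z_{c}(N)=N$ for the upper central series, one has $Z_{1}(N)=Z(N)$ and the standard identity $Z_{i}\bigl(N/Z(N)\bigr)=Z_{i+1}(N)/Z(N)$, so the upper central factors of $N/Z(N)$ are, up to isomorphism, the torsion-free groups $Z_{i+1}(N)/Z_{i}(N)$ with $i\geq 1$. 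A nilpotent group all of whose upper central factors are torsion-free is itself torsion-free, by iterating the same extension principle along that series; hence $N/Z(N)$ is torsion-free and, by the reduction above, so is $G/Z(N)$. (Alternatively one could establish the isolation of the centre directly, by induction on the nilpotency class of $N$ using commutator identities.)

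The only real obstacle is this last step, namely that $N/Z(N)$ is torsion-free whenever $N$ is torsion-free nilpotent. Once that classical input is granted, every other step is purely formal, so I would either cite the Mal'cev result or include the short induction along the upper central series sketched above.
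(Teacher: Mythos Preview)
Your proof is correct and follows essentially the same route as the paper: reduce via the induced extension $N/Z(N)\mono G/Z(N)\epi Q$ to showing that $N/Z(N)$ is torsion-free, and deduce this from Mal'cev's theorem that the upper central factors of a torsion-free nilpotent group are torsion-free. The paper phrases the last step as two successive applications of Mal'cev (first to $N$ to see that $Z(N/Z(N))=Z_{2}(N)/Z(N)$ is torsion-free, then to $N/Z(N)$), while you identify the upper central factors of $N/Z(N)$ directly with the shifted factors of $N$; these are the same argument.
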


\begin{proof}
    Since $Z(N)$ is normal in $G$ we have a short exact sequence
    \begin{equation*}
	N/Z(N) \mono G/Z(N) \epi Q
    \end{equation*}
    and it suffices to show that $N/Z(N)$ is torsion-free.  Since
    $Z(N)$ is torsion-free, a Theorem of Mal'cev~\cite{rob}*{5.2.19}
    implies that every upper central factor of $N$ is also
    torsion-free.  Hence, in particular $Z(N/Z(N))=Z_2(N)/Z(N)$ is
    torsion-free.  Applying Mal'cev's result to $N/Z(N)$ yields the
    result as $N/Z(N)$ is nilpotent.
\end{proof}

\begin{theorem}\label{nilp-by-ab} 
    Let $G$ be a group such that there is a short exact sequence
    \begin{equation*}
	N \mono G \epi Q,
    \end{equation*}
    where $N$ is torsion-free nilpotent and $Q$ is torsion-free
    abelian.  Assume that the Hirsch length of $G$ is finite.  Then
    there is a recursively defined integer $g=g(h(N), \linebreak[3] c(N),h(Q))$
    depending only only the Hirsch lengths of $N$ and $Q$ and the
    nilpotency class of $N$ such that
    $$\uucd G \leq g.$$
\end{theorem}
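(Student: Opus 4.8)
The plan is to combine Mart\'inez-P\'erez' spectral sequence (Proposition~\ref{prop:martinez}) with an induction on the nilpotency class of $N$, using Proposition~\ref{ab-by-cyc} as the base case and the torsion-freeness supplied by Lemma~\ref{factoroutcentre} and Mal'cev's theorem to drive the induction. First I would record the two easy ingredients. Since $Q$ is torsion-free abelian of finite Hirsch length it embeds in $\Q^{h(Q)}$ and is therefore countable, so Lemma~\ref{lem:cd-A} gives $\uucd Q\leq h(Q)+2$. Moreover, since $Q$ is torsion-free, every virtually cyclic subgroup of $Q$ is trivial or infinite cyclic; hence in $N\mono G\epi Q$ every intermediate $N\leq H\leq G$ with $H/N$ virtually cyclic satisfies $H=N$ or $H=N\rtimes\langle s\rangle$ with $\langle s\rangle\cong\Z$ (the extension by $\Z$ splitting because $\Z$ is free). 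By Proposition~\ref{prop:martinez} the theorem follows once I produce a bound, uniform over all such $H$ and depending only on $h(N)$ and $c(N)$, on $\uucd(N\rtimes\Z)$; applied with the trivial action this also bounds $\uucd N$ via the subgroup monotonicity $\uucd N\leq\uucd(N\times\Z)$.

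The heart of the argument is the inner claim, proved by induction on $c=c(N)$: for every torsion-free nilpotent $N$ and every $\Z$-action, $\uucd(N\rtimes\Z)\leq b(h(N),c)$ for a recursively defined $b$. For $c=1$ the group $N$ is abelian and this is exactly Proposition~\ref{ab-by-cyc}, so $b(h,1)=f(h)$. For $c\geq2$ set $M=N\rtimes\Z$ and $Z=Z(N)$, a nontrivial torsion-free abelian characteristic subgroup, hence normal in $M$ with $1\leq h(Z)\leq h(N)$. Lemma~\ref{factoroutcentre} applied to $N\mono M\epi\Z$ shows $M/Z$ is torsion-free, and by Mal'cev $N/Z$ is torsion-free nilpotent of class $c-1$, so $M/Z\cong(N/Z)\rtimes\Z$ is again of the required form with strictly smaller class and Hirsch length $h(N)-h(Z)<h(N)$. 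I then apply Proposition~\ref{prop:martinez} to $Z\mono M\epi M/Z$: because $M/Z$ is torsion-free, an intermediate subgroup $Z\leq H\leq M$ with $H/Z$ virtually cyclic is either $H=Z$ (abelian, with $\uucd Z\leq h(Z)+2$ by Lemma~\ref{lem:cd-A}) or $H=Z\rtimes\Z$ (abelian-by-cyclic, with $\uucd H\leq f(h(Z))$ by Proposition~\ref{ab-by-cyc}). Since $f$ is non-decreasing and $f(h)\geq h+3$ for $h\geq1$, both quantities are at most $f(h(N))$, so Proposition~\ref{prop:martinez} yields
\begin{equation*}
    \uucd M\leq f(h(N))+\uucd(M/Z)\leq f(h(N))+b(h(N),c-1),
\end{equation*}
giving the recursion $b(h,c)=f(h)+b(h,c-1)$, whence $b(h,c)\leq c\cdot f(h)$.

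Feeding this back into the outer extension, Proposition~\ref{prop:martinez} applied to $N\mono G\epi Q$ bounds every relevant $\uucd H$ by $c(N)\,f(h(N))$ and therefore gives
\begin{equation*}
    \uucd G\leq c(N)\,f(h(N))+h(Q)+2,
\end{equation*}
a recursively defined integer of the required form (the recursion coming from $f$). The main obstacle is the inner induction: one must verify that passing to $M/Z(N)$ remains inside the class of torsion-free-nilpotent-by-infinite-cyclic groups with strictly smaller invariants, which relies on the torsion-freeness statements of Lemma~\ref{factoroutcentre} and Mal'cev's theorem, and that the ``slices'' $Z(N)\rtimes\Z$ arising in the Mart\'inez-P\'erez step are abelian-by-cyclic, so that the already-established Proposition~\ref{ab-by-cyc} supplies a bound uniform in the action.
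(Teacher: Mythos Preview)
Your proof is correct and follows essentially the same strategy as the paper's: an outer application of Proposition~\ref{prop:martinez} to $N\mono G\epi Q$, reducing to a uniform bound on $\uucd(N\rtimes\Z)$, and then an inner induction on the nilpotency class via Proposition~\ref{prop:martinez} applied to $Z(N)\mono N\rtimes\Z\epi (N/Z(N))\rtimes\Z$, with Proposition~\ref{ab-by-cyc} handling both the base case and the abelian slices $Z(N)\rtimes\Z$.

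The only differences are cosmetic. The paper first reduces to finitely generated $G$ (via Corollary~\ref{cor:directunions}) so that $Q$ is finitely generated and L\"uck--Weiermann gives $\uucd Q\leq h(Q)+1$; you instead observe directly that $Q$ is countable and use Lemma~\ref{lem:cd-A} to get $h(Q)+2$, which is arguably cleaner. Likewise, the paper writes $s_N=\max\{f(1),\ldots,f(h(N))\}$ to bound $f(h(Z(N)))$, whereas you note that $f$ is non-decreasing; these are equivalent. Your explicit closed form $b(h,c)=c\cdot f(h)$ is a nice bonus the paper does not spell out.
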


\begin{proof}
    By~\cite{bieribook} and Corollary~\ref{cor:directunions} we may
    assume that $G$ is finitely generated.  $Q$ is finitely generated
    abelian of finite Hirsch length $h(Q)$.  \cite{lueckweiermann}
    implies that $\uucd Q \leq h(Q)+1.$ To apply
    Proposition~\ref{prop:martinez} we need to consider all infinite
    cyclic extensions $H=N \rtimes \langle t \rangle$ and show that
    there is a positive integer $M$ depending only on the Hirsch
    length and the nilpotency class of $N$ such that $\uucd (N \rtimes
    \langle t \rangle) \leq M. $ We prove this by induction on the
    nilpotency class $c$ of $N$.  Let $s_N=\max\{f(1),\ldots
    ,f(h(N))\}$, where $f(n), \, n\in \{1,\ldots  ,h(N)\}$ denotes the
    integer of \ref{ab-by-cyc}.
    
    If $c=1$ we are in the situation of Proposition~\ref{ab-by-cyc}
    and $M\leq f(h(N)) \leq s_N.$

    Now suppose $c>1$ and for all nilpotent groups $N_1$ with
    nilpotency class $<c$ and Hirsch-length $\leq h(N)$, we have an
    integer $M_1,$ depending only on the nilpotency class of $N_1$ and
    the Hirsch length of~$N,$ such that $\uucd(N_1 \rtimes \langle t
    \rangle) \leq M_1$.  $N$ is nilpotent so $Z(N) \neq 1$ and
    $H/Z(N)$ is (torsion-free nilpotent)-by-(infinite cyclic) and
    $c(N/Z(N))<c$, see Lemma \ref{factoroutcentre}.  We also have that
    $h(N/Z(N))\leq h(N).$ Hence, by induction, $\uucd(H/Z(N)) \leq
    M_1.$ Furthermore, we can apply Proposition~\ref{ab-by-cyc} to
    every infinite cyclic extension $T = Z(N) \rtimes \langle t
    \rangle$ of $Z(N)$ which gives $\uucd T \leq f(Z(N))\leq s_N$,
    using that $h(Z(N)) \leq h(N).$ Now apply Martínez-Pérez spectral
    sequence to the short exact sequence
    \begin{equation*}
	Z(N) \mono H \epi H/Z(N)
    \end{equation*}
    to give
    \begin{equation*}
	\uucd H =\uucd (N \rtimes \langle t \rangle) \leq s_N+M_1 = M.
    \end{equation*}
    Since both, $s_N$ and $M_1$ are independent of the choice of
    cyclic extension $H$ of $N$, so is $M$.  Another application of
    the spectral sequence yields
    \begin{equation*}
        \uucd G \leq M +\uucd Q \leq M+h(Q)+1=g.\tag*{\qedhere}
    \end{equation*}
\end{proof}


\section{Proof of the Main Theorem}

The proof is now an easy application of a theorem by Hillman and
Linnell~\cite{hillmanlinnell}.  We shall refer to an alternative proof
of their theorem, see points~(f) and~(g) Wehrfritz \cite{wehrfritz},
whose statement is better suited to our purpose.  For any group~$G$ we
denote by $\tau(G)$ its unique maximal normal locally finite subgroup.

\begin{theorem}\cites{hillmanlinnell,  wehrfritz}
    Let $G$ be an elementary amenable group of finite Hirsch length
    $h$.  Then there is an integer-valued function $j(h)$ of $h$ only
    such that $G$ has characteristic subgroups $\tau(G) \leq N \leq M$
    with $N/\tau(G)$ torsion-free nilpotent, $M/N$ free abelian of
    finite rank and $|G:M|$ at most $j(h).$
\end{theorem}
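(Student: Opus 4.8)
The statement is a structure theorem, and my plan is to build the characteristic chain $\tau(G)\le N\le M\le G$ in three stages, reducing it to the structure theory of torsion-free soluble groups of finite Hirsch length. First I would dispose of the locally finite part. In any group the join of all normal locally finite subgroups is itself locally finite---a finite join $A_1\cdots A_r$ is locally-finite-by-locally-finite, and an arbitrary join is the directed union of such finite joins---and it is invariant under every automorphism; hence $\tau(G)$ exists, is characteristic, and contains every normal locally finite subgroup. As locally finite groups have Hirsch length $0$ and Hirsch length is additive along normal series, $\bar G:=G/\tau(G)$ again has Hirsch length $h$, and if $L/\tau(G)$ were a nontrivial normal locally finite subgroup of $\bar G$ then $L$ would be normal and locally-finite-by-locally-finite, forcing $L\le\tau(G)$; thus $\bar G$ has trivial locally finite radical.

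Second, the heavy imported input is the Hillman--Linnell characterisation: an elementary amenable group of finite Hirsch length is locally-finite-by-(virtually soluble), with the derived length and Hirsch length of the soluble quotient bounded in terms of $h$. Combined with the maximality of $\tau(G)$, this presents $\bar G$ as soluble-by-finite of Hirsch length $h$; and because the entire locally finite radical has been divided out, Wehrfritz's refinement yields a \emph{characteristic torsion-free soluble} subgroup $S\le\bar G$ with index $|\bar G:S|$ bounded by a function $j_0(h)$ of $h$ alone. Being torsion-free of finite Hirsch length, $S$ has finite Pr\"ufer rank.

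Third I would extract the nilpotent-by-abelian structure from $S$. In a finite-rank soluble group the Fitting subgroup $\mathrm{Fit}(S)$ is nilpotent and characteristic, and as a subgroup of the torsion-free group $S$ it is torsion-free; I would set $N/\tau(G)=\mathrm{Fit}(S)$, so that $N/\tau(G)$ is torsion-free nilpotent and $N$ is characteristic in $G$. Since $C_S(\mathrm{Fit}(S))\le\mathrm{Fit}(S)$, the quotient $S/\mathrm{Fit}(S)$ embeds in the automorphism group of a finite-rank torsion-free nilpotent group and hence, via the Mal'cev completion, into $\GL_n(\Q)$ as a soluble linear group with $n\le h$. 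Its unipotent radical must be trivial, for otherwise the corresponding subgroup of $S$ would act unipotently on $\mathrm{Fit}(S)$ and, being nilpotent with unipotent action on a nilpotent group, would enlarge the Fitting subgroup. By Mal'cev's theorem on soluble linear groups $S/\mathrm{Fit}(S)$ is therefore diagonalisable-by-finite, i.e.\ abelian-by-finite; Jordan's theorem (the linear counterpart of Lemma~\ref{selberg}) bounds the finite part in terms of $n\le h$, and the torsion-free abelian part, being a finite-rank group of semisimple automorphisms, is free abelian of finite rank. Taking $M$ to be the preimage of a characteristic free abelian subgroup of finite index in $S/\mathrm{Fit}(S)$ and pushing the bounded finite quotient up into the top then gives $M/N$ free abelian of finite rank, $M$ characteristic, and $|G:M|$ bounded by the product of the two indices---the desired $j(h)$.

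The genuine obstacle lies entirely in the second stage: producing finite-index and torsion data bounded purely in terms of $h$, uniformly over all elementary amenable $G$. Once that uniformity is granted, the third stage is standard Fitting-subgroup theory together with Mal'cev's and Jordan's theorems for soluble linear groups. It is exactly this uniform control, furnished by Hillman--Linnell and Wehrfritz, that lets the recursively defined bounds of the earlier sections be fed into the proof of the Main Theorem.
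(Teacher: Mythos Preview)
The paper does not supply its own proof of this statement: the theorem is quoted directly from \cite{hillmanlinnell} and \cite{wehrfritz} (the paper explicitly points to items~(f) and~(g) of the latter) and is then used as a black box in the proof of the Main Theorem. There is therefore nothing in the paper to compare your argument against.

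That said, your three-stage sketch is a faithful outline of how the cited result is actually established, and you correctly isolate the genuine content as lying in stage~2: the uniform bounds depending on $h$ alone are precisely what Hillman--Linnell and Wehrfritz provide, and the rest is classical structure theory for soluble groups of finite rank. Two places in stage~3 would need tightening for a self-contained write-up. First, from $C_{S}(\mathrm{Fit}(S))\le\mathrm{Fit}(S)$ one only gets an embedding of $S/Z(\mathrm{Fit}(S))$ into $\mathrm{Aut}(\mathrm{Fit}(S))$, not of $S/\mathrm{Fit}(S)$; the standard remedy is to let $S$ act on the rational Lie algebra of the Mal'cev completion of $\mathrm{Fit}(S)$ (dimension at most $h$), identify the unipotent part of the image with the image of $\mathrm{Fit}(S)$ via your maximality argument, and then conclude that $S/\mathrm{Fit}(S)$ is abelian-by-finite with the finite part bounded in terms of $h$. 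Second, the passage from ``torsion-free abelian of finite rank'' to ``free abelian of finite rank'' for $M/N$ deserves a sentence: a commuting family of semisimple elements of $\GL_{n}(\Q)$ is simultaneously diagonalisable over a fixed number field $K$, hence embeds in $(K^{*})^{n}$, and $K^{*}$ modulo its torsion is free abelian; any torsion-free subgroup is therefore free abelian of finite rank. With these small amendments your outline matches the argument in the cited literature.
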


\begin{proof}[Proof of the Main Theorem]
    Since $G$ has a bound on the orders of the finite subgroups,
    $\tau(G)$ is finite.  An application of
    Proposition~\ref{prop:finkernel} allows us to assume that
    $\tau(G)=\{1\}$ and hence that $G$ is virtually torsion-free.
    Using Proposition~\ref{prop:gd-for-finite-index} we can assume
    that $G$ is torsion-free nilpotent-by-abelian.  Hence we can apply
    Theorem~\ref{nilp-by-ab}.
\end{proof}

\begin{remark}
    To remove the condition that there is a bound on the orders of the
    finite subgroups, one needs to understand virtually cyclic
    extensions of large locally finite groups.  This would allow us to
    apply Mart\'inez-P\'erez' spectral sequence as before.  Since
    $\tau(G)$ is locally finite, every virtually cyclic subgroup is,
    in fact, finite and hence $\underline{\underline E}(\tau(G)) =
    {\underline E}(\tau(G))$ and these are well understood
    \cite{dicksetal}.  In a recent article Degrijse and Petrosyan have
    provided bounds for the dimension of $\underline{\underline E}T$
    for $T$ locally finite-by-virtually cyclic ~\cite{degrijse-11}.
    This implies that every elementary amenable group $G$ admits a
    finite dimensional model for $\underline{\underline E}G.$
\end{remark}

\subsection*{Acknowledgements} 

We would like to thank Ashot Minasyan for numerous helpful
discussions and the referee for carefully reading an earlier version.


\section*{References}

\begin{biblist}
    \bib{bieribook}{book}{
    author={Bieri, R.},
    title={Homological dimension of discrete groups},
    edition={Second},
    series={Queen Mary College Mathematical Notes},
    publisher={Queen Mary College Department of Pure Mathematics},
    address={London},
    date={1981},
    review={\MR{MR715779 (84h:20047)}},
    }
    
    \bib{bredon-67}{book}{
    author={Bredon, G.~E.},
    title={Equivariant cohomology theories},
    series={Lecture Notes in Mathematics, No. 34},
    publisher={Springer-Verlag},
    address={Berlin},
    date={1967},
    review={\MR{MR0214062 (35 \#4914)}},
    }

    \bib{brown-82}{book}{
    author={Brown, K.~S.},
    title={Cohomology of groups},
    series={Graduate Texts in Mathematics},
    publisher={Springer-Verlag},
    address={New York},
    date={1982},
    volume={87},
    ISBN={0-387-90688-6},
    review={\MR{MR672956 (83k:20002)}},
    }

    \bib{degrijse-11}{article}{
    author={Degrijse, D.},
    author={Petrosyan, N.},
    title={Commensurators and classifying spaces with virtually cyclic
    stabilizers},
    date={2011},
    eprint={arxiv:1108.6279v1},
    url={http://arxiv.org/abs/1108.6279v1},
    }
    
    \bib{DPT}{article}{
    author={ Dembegioti, F.}
    author={Petrosyan, N.}
    author={ Talelli, O.}
    title={Intermediaries in Bredon (Co)homology and Classifying Spaces}
    date={2011}
    eprint={	arXiv:1104.2539v1}
    url={http://http://arxiv.org/abs/1104.2539}
    }

    \bib{dicksetal}{article}{
    AUTHOR = {Dicks, W.},
    author={Kropholler, P. H.},
    author={Leary, I. J.},
    author={Thomas, S.},
    TITLE = {Classifying spaces for proper actions of locally finite
    groups},
    JOURNAL = {J. Group Theory},
    VOLUME = {5},
    YEAR = {2002},
    NUMBER = {4},
    pages={453\ndash 480},
    review={\MR{MR1931370 (2003g:20064)}},
    }

    \bib{farley-10}{article}{
    author={Farley, D.},
    title={Constructions of {$E_{\mathcal{VC}}$} and {$E_{\mathcal{FBC}}$}
    for groups acting on {${\rm CAT}(0)$} spaces},
    date={2010},
    journal={Algebr. Geom. Topol.},
    volume={10},
    number={4},
    pages={2229\ndash 2250},
    }

    \bib{fn}{article}{
    author={Flores, R.~J.},
    author={Nucinkis, B.~E.~A.},
    title={On {B}redon homology of elementary amenable groups},
    date={2005},
    ISSN={0002-9939},
    journal={Proc. Amer. Math. Soc.},
    volume={135},
    number={1},
    pages={5\ndash 11 (electronic)},
    review={\MR{MR2280168}},
    }
    
    \bib{fluch-thesis}{article}{
    author={Fluch, M.},
    title={On {B}redon (Co-)Homological Dimensions of Groups},
    date={2010},
    journal={Ph.D thesis, University of Southampton},
    url={http://arxiv.org/abs/1009.4633v1},
    eprint={arXiv:1009.4633v1},
    }

    \bib{fluch-11}{article}{
    author={Fluch, M.},
    title={Classifying spaces with virtually cyclic stabilisers for certain
    infinite cyclic extensions},
    date={2011},
    journal={J. Pure Appl. Algebra},
    url={http://dx.doi.org/10.1016/j.jpaa.2011.01.001},
    }
    
    \bib{hillmanlinnell}{article}{
    author={Hillman, J.~A.},
    author={Linnell, P.~A.},
    title={Elementary amenable groups of finite {H}irsch length are
    locally-finite by virtually-solvable},
    date={1992},
    ISSN={0263-6115},
    journal={J. Austral. Math. Soc. Ser. A},
    volume={52},
    number={2},
    pages={237\ndash 241},
    review={\MR{MR1143191 (93b:20067)}},
    }

    \bib{jpl}{incollection}{
    author={Juan-Pineda, D.},
    author={Leary, I.~J.},
    title={On classifying spaces for the family of virtually cyclic
    subgroups},
    date={2006},
    booktitle={Recent developments in algebraic topology},
    series={Contemp. Math.},
    volume={407},
    publisher={Amer. Math. Soc.},
    address={Providence, RI},
    pages={135\ndash 145},
    review={\MR{MR2248975 (2007d:19001)}},
    }

    \bib{KoMN}{article}{
    author={Kochloukova, D.~H.},
    author={Mart{\'{\i}}nez-P{\'e}rez, C.},
    author={Nucinkis, B.~E.~A.},
    title={Cohomological finiteness conditions in {B}redon
    cohomology},
    date={2011},
    journal={Bull. London Math. Soc.},
    number={43},
    pages={124\ndash 136},
    }

    \bib{kmn}{article}{
    author={Kropholler, P.~H.},
    author={Mart{\'{\i}}nez-P{\'e}rez, C.},
    author={Nucinkis, B.~E.~A.},
    title={Cohomological finiteness conditions for elementary amenable
    groups},
    date={2009},
    ISSN={0075-4102},
    journal={J. Reine Angew. Math.},
    volume={637},
    pages={49\ndash 62},
    review={\MR{MR2599081}},
    }

    \bib{kropholler86}{article}{
    author={Kropholler, P.~H.},
    title={Cohomological dimension of soluble groups},
    date={1986},
    ISSN={0022-4049},
    journal={J. Pure Appl. Algebra},
    volume={43},
    number={3},
    pages={281\ndash 287},
    review={\MR{MR868988 (88h:20063)}},
    }

    \bib{kropholler93b}{article}{
    author={Kropholler, P.~H.},
    title={On groups of type {$({\rm FP})\sb \infty$}},
    date={1993},
    ISSN={0022-4049},
    journal={J. Pure Appl. Algebra},
    volume={90},
    number={1},
    pages={55\ndash 67},
    review={\MR{MR1246274 (94j:20051b)}},
    }

    \bib{lafontortiz}{article}{
    author={Lafont, J.-F.},
    author={Ortiz, I.~J.},
    title={Relative hyperbolicity, classifying spaces, and lower
    algebraic
    {$K$}-theory},
    date={2007},
    ISSN={0040-9383},
    journal={Topology},
    volume={46},
    number={6},
    pages={527\ndash 553},
    review={\MR{MR2363244}},
    }
    
    \bib{leonardi}{article}{
    author={Leonardi, F.},
    title={K{\"u}nneth formula for {B}redon homology and group
    $C^{*}$-algebras},
    journal={doctoral thesis, ETH},
    date={2006},
    review={\MR{2715889}},
    }
    
    
    \bib{lueckbook}{book}{
    author={L{\"u}ck, W.},
    title={Transformation groups and algebraic {$K$}-theory},
    series={Lecture Notes in Mathematics},
    publisher={Springer-Verlag},
    address={Berlin},
    date={1989},
    volume={1408},
    ISBN={3-540-51846-0},
    note={Mathematica Gottingensis},
    review={\MR{MR1027600 (91g:57036)}},
    }

    \bib{lueck}{article}{
    author={L{\"u}ck, W.},
    title={The type of the classifying space for a family of
    subgroups},
    date={2000},
    ISSN={0022-4049},
    journal={J. Pure Appl. Algebra},
    volume={149},
    number={2},
    pages={177\ndash 203},
    review={\MR{MR1757730 (2001i:55018)}},
    }

    \bib{luecksurvey}{incollection}{
    author={L{\"u}ck, W.},
    title={Survey on classifying spaces for families of subgroups},
    date={2005},
    booktitle={Infinite groups: geometric, combinatorial and
    dynamical aspects},
    series={Progr. Math.},
    volume={248},
    publisher={Birkh\"auser},
    address={Basel},
    pages={269\ndash 322},
    review={\MR{MR2195456 (2006m:55036)}},
    }
    
    \bib{luck-09}{article}{
    author={L{\"u}ck, W.},
    title={On the classifying space of the family of virtually cyclic
    subgroups for {$\rm CAT(0)$}-groups},
    date={2009},
    ISSN={1867-5778},
    journal={M\"unster J. Math.},
    volume={2},
    pages={201\ndash 214},
    review={\MR{MR2545612}},
    }

    \bib{lm}{incollection}{
    author={L{\"u}ck, W.},
    author={Meintrup, D.},
    title={On the universal space for group actions with compact
    isotropy},
    date={2000},
    booktitle={Geometry and topology: Aarhus (1998)},
    series={Contemp. Math.},
    volume={258},
    publisher={Amer. Math. Soc.},
    address={Providence, RI},
    pages={293\ndash 305},
    review={\MR{MR1778113 (2001e:55023)}},
    }
    
    \bib{lueckweiermann}{article}{
    author={L{\"u}ck, W.},
    author={Weiermann, M.},
    title={On the classifying space of the family of virtually cyclic
    subgroups},
    date={2012},
    journal={Pure App. Math. Q.},
    volume={8},
    number={2},
    pages={479\ndash 555},
    }
    
    \bib{mac-lane-98}{book}{
    author={Mac~Lane, S.},
    title={Categories for the working mathematician},
    edition={Second},
    series={Graduate Texts in Mathematics},
    publisher={Springer-Verlag},
    address={New York},
    date={1998},
    volume={5},
    ISBN={0-387-98403-8},
    review={\MR{MR1712872 (2001j:18001)}},
    }

    \bib{martinez}{article}{
    author={Mart{\'{\i}}nez-P{\'e}rez, C.},
    title={A spectral sequence in {B}redon (co)homology},
    date={2002},
    ISSN={0022-4049},
    journal={J. Pure Appl. Algebra},
    volume={176},
    number={2-3},
    pages={161\ndash 173},
    review={\MR{MR1933713 (2003h:20095)}},
    }

    \bib{mislin-03}{book}{
    author={Mislin, G.},
    author={Valette, A.},
    title={Proper group actions and the {B}aum-{C}onnes conjecture},
    series={Advanced Courses in Mathematics. CRM Barcelona},
    publisher={Birkh\"auser Verlag},
    address={Basel},
    date={2003},
    ISBN={3-7643-0408-1},
    review={\MR{MR2027168 (2005d:19007)}},
    }

    \bib{nucinkis-04}{article}{
    author={Nucinkis, B.~E.~A.},
    title={On dimensions in {B}redon homology},
    date={2004},
    ISSN={1532-0081},
    journal={Homology Homotopy Appl.},
    volume={6},
    number={1},
    pages={33\ndash 47 (electronic)},
    review={\MR{MR2061566 (2005c:20092)}},
    }

    \bib{rob}{book}{
    author={Robinson, D.~J.~S.},
    title={A course in the theory of groups},
    edition={Second},
    series={Graduate Texts in Mathematics},
    publisher={Springer-Verlag},
    address={New York},
    date={1996},
    volume={80},
    ISBN={0-387-94461-3},
    review={\MR{MR1357169 (96f:20001)}},
    }
    
    \bib{sanchez}{article}{
    author={S{{\'a}}nchez-Garc{\'{\i}}a, R.~J.},
    title={Equivariant {$K$}-homology for some {C}oxeter groups},
    date={2007},
    ISSN={0024-6107},
    journal={J. Lond. Math. Soc. (2)},
    volume={75},
    number={3},
    pages={773\ndash 790},
    url={http://dx.doi.org/10.1112/jlms/jdm035},
    review={\MR{2352735 (2009b:19006)}},
    }
    
    \bib{schubert-70a}{book}{
    author={Schubert, H.},
    title={Kategorien {II}},
    series={Heidelberger Taschenb{\"u}cher},
    publisher={Springer-Verlag},
    address={Berlin},
    date={1970},
    volume={66},
    review={\MR{0274548 (43 \#311)}},
    }
    
    \bib{symonds-05}{article}{
    author={Symonds, P.},
    title={The {B}redon cohomology of subgroup complexes},
    date={2005},
    ISSN={0022-4049},
    journal={J. Pure Appl. Algebra},
    volume={199},
    number={1--3},
    pages={261\ndash 298},
    review={\MR{MR2134305 (2006e:20093)}},
    }

    \bib{wehrfritz70}{article}{
    author={Wehrfritz, B.~A.~F.},
    title={Groups of automorphisms of soluble groups},
    date={1970},
    ISSN={0024-6115},
    journal={Proc. London Math. Soc. (3)},
    volume={20},
    pages={101\ndash 122},
    review={\MR{0251120 (40 \#4351)}},
    }
    
    \bib{wehrfritz}{article}{
    author={Wehrfritz, B.~A.~F.},
    title={On elementary amenable groups of finite {H}irsch number},
    date={1995},
    ISSN={0263-6115},
    journal={J. Austral. Math. Soc. Ser. A},
    volume={58},
    number={2},
    pages={219\ndash 221},
    review={\MR{MR1323993 (96a:20050)}},
    }
    
    \bib{weibel-94}{book}{
    author={Weibel, C.~A.},
    title={An introduction to homological algebra},
    series={Cambridge Studies in Advanced Mathematics},
    publisher={Cambridge University Press},
    address={Cambridge},
    date={1994},
    volume={38},
    ISBN={0-521-43500-5; 0-521-55987-1},
    review={\MR{MR1269324 (95f:18001)}},
    }
\end{biblist}
\end{document}